\newtheorem{theorem}{Theorem}[section]
\newtheorem{lemma}[theorem]{Lemma}
\newtheorem{corollary}[theorem]{Corollary}
\newtheorem{proposition}[theorem]{Proposition}
\theoremstyle{definition}
\newtheorem{definition}[theorem]{Definition}
\newtheorem{remark}[theorem]{Remark}
\theoremstyle{definition}
\def\cl{\mathop\mathrm{cl}\nolimits}
\def\B{\mathrm{B}}
\def\s{\mathbb{S}}
\def\K{\mathcal{K}}
\def\r{\mathcal{R}}
\def\U{\mathcal{U}}
\def\R{\mathbb{R}}
\def\vol{\mathrm{vol}}
\def\S{\mathrm{S}}
\def\inr{\mathrm{r}}
\def\W{\mathrm{W}}
\def\abs#1{\left| #1\right|}
\newcommand{\form}[1]{#1^{*}}
\numberwithin{equation}{section}
\title{Isoperimetric relations for inner parallel bodies}
\author{M. A. Hern\'andez Cifre}
\address{Departamento de Matem\'aticas, Universidad de Murcia, Campus de
Espinar\-do, 30100-Murcia, Spain} \email{mhcifre@um.es}
\author{E. Saor\'\i n G\'omez}
\address{ALTA institute for Algebra, Geometry, Topology and their
Applications, Universit\"at Bremen, 28334-Bremen, Germany}
\email{esaoring@uni-bremen.de}
\thanks{This research is a result of the activity developed within the
framework of the Programme in Support of Excellence Groups of the Regi\'on
de Murcia, Spain, by Fundaci\'on S\'eneca, Science and Technology Agency
of the Regi\'on de Murcia. The work is partially supported by MICINN/FEDER
project PGC2018-097046-B-I00 and Fundaci\'on S\'eneca project
19901/GERM/15}
\subjclass[2010]{Primary 52A20, 52A39; Secondary 52A40}
\keywords{Inner parallel bodies, form body, isoperimetric quotient,
isoperimetric deficit, extreme vector, quermassintegrals}
\begin{document}

\begin{abstract}
We analyze aspects of the behavior of the family of inner parallel bodies
of a convex body for the isoperimetric quotient and deficit of arbitrary
quermassintegrals. By means of technical boundary properties of the
so-called form body of a convex body and similar constructions for inner
parallel bodies, we point out an erroneous use of a relation between the
latter bodies in two different works. We correct these results, limiting
them to convex bodies having a very precise boundary structure.
\end{abstract}

\maketitle

\section{Introduction}

Inner parallel bodies of convex bodies have been object of recent studies
with different flavors
\cite{Domokos,HCS,HCS2,HCS3,HCS_rbeta,Larson,Linke}. More classical
existing literature on them (e.g. \cite{Bol43,Di,Di49,Ha55,Ha57,SYphd})
along with its role in the proofs of fundamental results in the theory of
convex bodies, make inner parallel bodies an essential object not only
within classical Convex Geometry (see \cite[Section~7.5]{Sch}), but also
in other related fields (see e.g. \cite{GK,HW,RV} and the references in
\cite[Note~3 for Section~3.1]{Sch} and \cite{Domokos}).

In \cite{Domokos} and \cite{Larson} the authors study the behavior of the
isoperimetric quotient for the family of inner parallel bodies, and
provide a lower bound for the perimeter of the inner parallel bodies of a
convex body, respectively. However, in both articles they happen to make
an erroneous use of the relation $K\subseteq
K_{\lambda}+\abs{K}K_{\lambda}^*$ between the inner parallel bodies
$K_{\lambda}$ of a convex body, their form bodies $K_{\lambda}^*$ and the
original convex body $K$ (see \eqref{e: eq contraej} and
Section~\ref{sect:backgrnd} for the proper definitions). This relation,
which holds, for example, under technical properties of the boundary of
the involved convex bodies (see \eqref{e: cond extremos SY}), is, however,
not true without further conditions. To the best of the authors'
knowledge, a full characterization of the conditions under which the above
inclusion holds is not known.

The purpose of this paper is two-sided. On the one hand, we describe the
error contained in the two mentioned references, providing with examples
proving these have to be adjusted with further hypotheses in order to
hold. On the other hand, we provide alternative proofs to those results
under suitable restrictions of the boundaries of the involved convex
bodies, and further, we extend the results concerning inner parallel
bodies in \cite{Domokos} to a more general setting.

The paper is organized as follows. In Section \ref{sect:backgrnd} we
introduce the notions and basic results, which are needed throughout the
paper. In Section \ref{s: counterexamples} we analyze the problems in the
proof of the main result in \cite{Domokos}, providing an example where the
used methods do not hold. In Section \ref{s: quotient and deficit} we
obtain new results concerning the behavior of the isoperimetric quotient
and deficit under assumptions on the boundary of the involved convex
bodies. Finally in Section \ref{s: perimeter} we point out an error -of
the same spirit of the one found in \cite{Domokos}- in one of the proofs
of \cite{Larson} and discuss it.

\section{Background}\label{sect:backgrnd}

Let $\K^n$ be the set of all convex bodies, i.e., nonempty compact convex
subsets of the Euclidean space $\R^n$, and let $\K^n_n$ the subset of
convex bodies having interior points. A convex body $K$ is called {\em
regular} if all its boundary points are regular, i.e., the supporting
hyperplane to $K$ at any boundary point is unique. Let $\B_n$ be the
$n$-dimensional Euclidean unit ball and $\s^{n-1}$ the corresponding unit
sphere. The volume of a measurable set $M\subset\R^n$, i.e., its
$n$-dimensional Lebesgue measure, is denoted by $\vol(M)$, and the measure
of its boundary, i.e., its surface area (also called perimeter), is
represented by $\S(M)$. Furthermore, the closure of $M$ is denoted by $\cl
M$. For $K\in\K^n$ and $u\in\s^{n-1}$, $h(K,u)=\sup\bigl\{\langle
x,u\rangle:x\in K\bigr\}$ stands for the {\em support function} of $K$
(see e.g. \cite[Section 1.7]{Sch}).

The vectorial or Minkowski addition of two sets $K,L\subset \R^n$ is given
by
\[
K+L=\{x+y:x\in K,\,y\in L\},
\]
whereas the Minkowski difference of $K,L\subset \R^n$ is given by
\[
K\sim L=\{x\in\R^n\,:\, x+L\subseteq K\}.
\]
We notice that $(K\sim L)+L\subseteq K$, and the inequality may be strict.

Let $K\in\K^n$ and $E\in\K^n_n$. The {\em inradius} $\inr(K;E)$ of $K$
relative to $E$ is the radius of one of the largest dilations of $E$ which
fits inside $K$, i.e.,
\[
\inr(K;E)=\sup\{r\geq 0:\exists\,x\in\R^n\text{ with } x+rE\subseteq K\}.
\]
For $-\inr(K;E)\leq\lambda\leq 0$ the {\em inner parallel body of $K$ at
distance $\abs{\lambda}$} is the Minkowski difference of $K$ and
$\abs{\lambda}E$, i.e.,
\[
K_{\lambda}:=K\sim\abs{\lambda}E=\bigl\{x\in\R^n:x+\abs{\lambda}E\subseteq
K\bigr\}\in\K^n.
\]
Notice that if $E=B_n$, then $K_{-\inr(K;\B_n)}$ is the set of incenters
of $K$, which is usually called the {\em kernel} of $K$, and its dimension
is strictly less than $n$ (see \cite[p.~59]{BF}). Equivalently (see
\cite[Section 3.1]{Sch}), the inner parallel body $K_{\lambda}$ of $K$,
$-\inr(K;E)\leq\lambda\leq 0$, can be defined using the support functions
of $K$ and $E$ as
\begin{equation}\label{e:K_lambda1}
K_{\lambda}=\bigl\{x\in\R^n: \langle x,u\rangle \leq h(K,u)-\abs{\lambda}
h(E,u), u\in\s^{n-1}\bigr\}.
\end{equation}

A vector $u\in\s^{n-1}$ is a {\em $0$-extreme normal vector} (or just {\em
extreme vector}) of $K$ if it cannot be written as a linear combination of
two linearly independent normal vectors at one and the same boundary point
of $K$. We denote by $\U(K)$ the set of $0$-extreme normal vectors of $K$,
which play a key role in the study of convex bodies. For instance, $\U(K)$
is the smallest set one can use so that
\begin{equation}\label{e:K_cap_U(K)}
K=\bigl\{x\in\R^n:\langle x,u\rangle\leq h(K,u), u\in\U(K)\bigr\}
\end{equation}
(see e.g. \cite[Corollary 1.4.5 or page~386]{Sch}), and thus, the inner
parallel bodies of $K$ can be expressed as (cf. \eqref{e:K_lambda1})
\begin{equation*}
K_{\lambda}=\bigl\{x\in\R^n:\langle x,u\rangle\leq
h(K,u)-\abs{\lambda}h(E,u),u\in\U(K)\bigr\}
\end{equation*}
for $-\inr(K;E)\leq\lambda\leq 0$.

The (relative) {\em form body} of a convex body $K\in\K^n_n$ {\em with
respect to} $E\in\K^n_n$, denoted by $K^*$, is defined as (see e.g.
\cite{Di})
\begin{equation*}
\form{K}=\bigl\{x\in\R^n:\langle x,u\rangle\leq h(E,u),u\in\U(K)\bigr\}.
\end{equation*}
We notice that $K^*$ strongly depends on the body $E$. Nevertheless, and
for the sake of simplicity, we omit $E$ in the notation.

The form body of $K\in\K^n_n$ (with respect to an arbitrary $E\in\K^n_n)$
is always a tangential body of $E$. We recall that a convex body
$K\in\K^n$ containing a convex body $E\in\K^n$, is called a {\em
tangential body} of $E$, if through each boundary point of $K$ there
exists a support hyperplane to $K$ that also supports $E$. We notice that
if $K$ is a tangential body of $E$, then $\inr(K;E)=1$.

There is also a very close connection between inner parallel bodies and
tangential bodies. The next result enlighten it.
\begin{theorem}{\cite[Lemma 3.1.14]{Sch}}\label{t:Schneider}
Let $K,E\in\K^n_n$ and let $-\inr(K;E)<\lambda<0$. Then $K_{\lambda}$ is
homothetic to $K$ if and only if $K$ is homothetic to a tangential body of
$E$.
\end{theorem}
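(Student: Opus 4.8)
The plan rests on the representations through extreme normal vectors: by \eqref{e:K_cap_U(K)} and the displayed formula after it, $K=\{x:\langle x,u\rangle\le h(K,u),\,u\in\U(K)\}$ — with $\U(K)$ minimal for this — and, for $-\inr(K;E)\le\lambda\le 0$, $K_\lambda=\{x:\langle x,u\rangle\le h(K,u)-\abs{\lambda}h(E,u),\,u\in\U(K)\}$. I will also use the standard fact (see \cite{Sch}) that a convex body $L\supseteq F$, $F\in\K^n_n$, is a tangential body of $F$ if and only if $h(L,u)=h(F,u)$ for every $u\in\U(L)$, together with the elementary identities: under a homothety $L\mapsto cL+t$, $c>0$, one has $\U(cL+t)=\U(L)$, $\inr(cL+t;E)=c\,\inr(L;E)$ and $(cL+t)\sim\rho E=c\bigl(L\sim(\rho/c)E\bigr)+t$; and $\inr(L\sim\rho E;E)=\inr(L;E)-\rho$ for $0\le\rho\le\inr(L;E)$ (since $x+\rho E\subseteq L\sim\rho' E$ iff $x+(\rho+\rho')E\subseteq L$, using $\rho E+\rho'E=(\rho+\rho')E$).

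\emph{Sufficiency.} Suppose $K=cT+t$ with $T$ a tangential body of $E$, so $\inr(K;E)=c$. It suffices to prove $T_\mu=(1-\abs{\mu})T$ for $-1<\mu<0$, since then $K_\lambda=c\,T_{-\abs{\lambda}/c}+t=c(1-\abs{\lambda}/c)T+t$, a homothet of $K$. The inclusion $(1-\abs{\mu})T\subseteq T_\mu$ holds because $h\bigl((1-\abs{\mu})T+\abs{\mu}E,u\bigr)=(1-\abs{\mu})h(T,u)+\abs{\mu}h(E,u)\le h(T,u)$, where only $E\subseteq T$ is used. Conversely, comparing the $\U(T)$-representations of $T_\mu$ and $(1-\abs{\mu})T$, the inclusion $T_\mu\subseteq(1-\abs{\mu})T$ is equivalent to $h(T,u)-\abs{\mu}h(E,u)\le(1-\abs{\mu})h(T,u)$ on $\U(T)$, i.e. to $h(E,u)\ge h(T,u)$ on $\U(T)$, which is exactly the tangential‑body characterization. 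Hence $T_\mu=(1-\abs{\mu})T$.

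\emph{Necessity.} Assume $K_\lambda=\alpha K+v$, $\alpha>0$, $v\in\R^n$, and write $\inr:=\inr(K;E)>0$. Comparing relative inradii on both sides gives $\inr-\abs{\lambda}=\alpha\inr$, hence $\alpha=1-\abs{\lambda}/\inr\in(0,1)$ since $0<\abs{\lambda}<\inr$. As $\alpha\ne 1$, the map $x\mapsto\alpha x+v$ has a unique fixed point $p$; replacing $K$ by $K-p$ we may assume $v=0$, i.e. $K_\lambda=\alpha K$. Then $h(K_\lambda,u)=\alpha h(K,u)$ for all $u$, while $K_\lambda+\abs{\lambda}E\subseteq K$ gives $\alpha h(K,u)+\abs{\lambda}h(E,u)\le h(K,u)$; using $1-\alpha=\abs{\lambda}/\inr$ this yields $\inr\,h(E,u)\le h(K,u)$ for all $u$, that is, $\inr\,E\subseteq K$. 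The proof is complete once we show this inequality is an \emph{equality} on $\U(K)$: together with $\inr\,E\subseteq K$, the tangential‑body characterization then gives that $K$ is a tangential body of $\inr\,E$, hence homothetic to a tangential body of $E$.

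\emph{The main step (the obstacle).} To prove $h(K,u)=\inr\,h(E,u)$ for every $u\in\U(K)$ one uses that $K_\lambda=K\sim\abs{\lambda}E$ is the \emph{largest} convex body $S$ with $S+\abs{\lambda}E\subseteq K$, combined with the minimality of the representation $K_\lambda=\alpha K=\{x:\langle x,u\rangle\le\alpha h(K,u),\,u\in\U(\alpha K)\}$ (note $\U(\alpha K)=\U(K)$). If $h(K,u_0)>\inr\,h(E,u_0)$ for some $u_0\in\U(K)$, then by continuity $h(K,u)-\inr\,h(E,u)>0$ on a neighbourhood of $u_0$, which provides the slack $(1-\alpha)h(K,u)-\abs{\lambda}h(E,u)=\tfrac{\abs{\lambda}}{\inr}\bigl(h(K,u)-\inr\,h(E,u)\bigr)>0$ available to enlarge $\alpha K$ slightly near the boundary point(s) of $\alpha K$ having $u_0$ as an outer normal, producing a convex body $S\supsetneq\alpha K=K_\lambda$ that still satisfies $S+\abs{\lambda}E\subseteq K$ — contradicting maximality. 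I expect this enlargement to be the only genuinely delicate point: the perturbation must be chosen so that $S+\abs{\lambda}E\subseteq K$ persists \emph{uniformly over all directions}, and one has to handle the cases where $\U(K)$ accumulates at $u_0$ or fails to positively span $\R^n$ once a neighbourhood of $u_0$ is removed. Everything else reduces to routine manipulation of support functions and Minkowski differences.
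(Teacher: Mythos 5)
You should first note that the paper itself contains no proof of Theorem~\ref{t:Schneider}: it is quoted verbatim from \cite[Lemma~3.1.14]{Sch}, so there is no internal argument to compare yours with, and it must stand on its own. Your sufficiency half does: it is exactly the computation $T_\mu=(1-\abs{\mu})T$ for a tangential body $T$ of $E$ recorded in Remark~\ref{r:tangential}, obtained correctly from the representation of $T_\mu$ over $\U(T)$ and the characterization of tangential bodies by $h(T,u)=h(E,u)$ on $\U(T)$. In the necessity half the reductions are also correct: $\alpha=1-\abs{\lambda}/\inr(K;E)$ via the behaviour of the inradius under Minkowski difference, the normalization $v=0$, the inclusion $\inr(K;E)E\subseteq K$ from Lemma~\ref{Collection of properties}~(iv), and the observation that everything boils down to proving $h(K,u)=\inr(K;E)\,h(E,u)$ for every $u\in\U(K)$.

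That last identity, however, is the entire content of the theorem, and you do not prove it: you describe a perturbation you ``expect'' to work and yourself list the obstructions, so the necessity direction is left with a genuine gap. Worse, the mechanism you sketch is unsound as stated. By definition $K_\lambda=K\sim\abs{\lambda}E$ already contains \emph{every} convex $S$ with $S+\abs{\lambda}E\subseteq K$, and ``positive slack at an extreme normal'', i.e.\ $h(K_\lambda,u_0)<h(K,u_0)-\abs{\lambda}h(E,u_0)$ with $u_0\in\U(K)$, occurs for perfectly ordinary bodies with no homothety hypothesis (e.g.\ a polytope with a small facet whose shifted facet hyperplane no longer touches $K_\lambda$); hence no argument using only the slack near $u_0$ can ever produce an enlargement. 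Concretely, pushing a boundary point $x^*$ of $\alpha K$ with outer normal $u_0$ in the direction $u_0$ fails precisely in the situations you flag: the normal cone of $\alpha K$ at $x^*$ will in general contain further vectors of $\U(K)$ with zero slack and positive inner product with $u_0$, and when $\U(K)$ accumulates at $u_0$ the $u_0$-constraint may even be redundant (the minimality of \eqref{e:K_cap_U(K)} holds only up to closure). What must enter, and is missing, is the homothety hypothesis itself via $\U(\alpha K)=\U(K)$. For instance: choose an open spherical neighbourhood $N$ of $u_0$ on whose closure the slack is bounded below; since $u_0\in\U(\alpha K)$ and $u_0\notin\cl\bigl(\U(K)\setminus N\bigr)$, minimality (up to closure) of the extreme-normal representation of $\alpha K$ shows that the intersection of the halfspaces $\{x:\langle x,u\rangle\le h(K,u)-\abs{\lambda}h(E,u)\}$ over $u\in\U(K)\setminus N$ is strictly larger than $\alpha K$; moving along a segment from an interior point of $\alpha K$ towards a point of that larger set and stopping just after leaving $\alpha K$, the uniform slack on $N$ absorbs the small violations and yields a point of $\bigcap_{u\in\U(K)}\{x:\langle x,u\rangle\le h(K,u)-\abs{\lambda}h(E,u)\}=K_\lambda$ outside $\alpha K$ --- the desired contradiction. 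Until an argument of this kind (or Schneider's original proof) is actually carried out, your necessity direction is a plan, not a proof.
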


\begin{remark}\label{r:tangential}
The proof of Theorem \ref{t:Schneider} shows that if $K$ is a tangential
body of $E$ then $K_{\lambda}=(1+\lambda)K$ for $-1<\lambda\leq 0$.
\end{remark}

In the following, we collect some standard properties of inner parallel
bodies, form bodies and extreme vectors, together with other relations
through the Minkowski sum, which will be needed later on. There exist
further relations, in a stronger form, through the so-called
Riemann-Minkowski integral, for which we refer to \cite{Dinghas56} and
\cite[Lemma 3.2]{SYphd}.

\begin{lemma}\label{Collection of properties}
Let $K,L\in\K^n$ and $E\in\K^n_n$. The following properties hold:
\begin{enumerate}\itemsep1pt
\item $\U(K_{\lambda})\subseteq \U(K)$ for $-\inr(K;E)<\lambda\leq 0$ (see
\cite[Lemma 4.5]{SYphd}).
\item If $K\in\K^n_n$ and $E$ is regular then $\cl\U(K)=\U(\form{K})$ (see
\cite[Lem\-ma~2.6]{SYphd} and \cite[Lemma 2.1]{HCS}).
\item $\U(K)\cup\U(L)\subseteq\U(K+L)=\U(K+\mu L)$ for $\mu>0$. The inclusion
may be strict (see \cite[Lemma 2.4]{SYphd} and \cite[Lemma 3.1]{HCS3}).
\item $K_{\lambda}+\abs{\lambda}E\subseteq K$ for $-\inr(K;E)\leq\lambda\leq 0$ (see \cite[(4.1)]{SYphd}).
\item If $K\in\K^n_n$ then $K_{\lambda}+\abs{\lambda}K^*\subseteq K$ for $-\inr(K;E)\leq\lambda\leq 0$ (see
\cite[Lemma 4.8]{SYphd}).
\end{enumerate}
\end{lemma}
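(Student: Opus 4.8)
The plan is to establish the five items one at a time; none of them is deep individually, and all are recorded in the cited sources, but it is worth indicating the mechanisms, since items (i)--(iii) rest purely on manipulations of $0$-extreme normal vectors while (iv) and (v) reduce to support-function comparisons.

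Items (iv) and (v) come first. Item (iv) is immediate: $K_{\lambda}=K\sim\abs{\lambda}E$ by definition, so $K_{\lambda}+\abs{\lambda}E\subseteq K$ is the special case $L=\abs{\lambda}E$ of the inclusion $(K\sim L)+L\subseteq K$ noted above. For item (v) I would compare support functions on $\U(K)$: from the definition of $\form{K}$ we have $h(\form{K},u)\le h(E,u)$ for every $u\in\U(K)$, while \eqref{e:K_lambda1} gives $h(K_{\lambda},u)\le h(K,u)-\abs{\lambda}h(E,u)$ for all $u\in\s^{n-1}$; adding these and using additivity of support functions under Minkowski sums,
\[
h\bigl(K_{\lambda}+\abs{\lambda}\form{K},u\bigr)=h(K_{\lambda},u)+\abs{\lambda}h(\form{K},u)\le h(K,u)\qquad\text{for all }u\in\U(K).
\]
By \eqref{e:K_cap_U(K)} the body $K$ is the intersection of the half-spaces $\{x:\langle x,u\rangle\le h(K,u)\}$ over $u\in\U(K)$, so the displayed inequality forces $K_{\lambda}+\abs{\lambda}\form{K}\subseteq K$; the endpoint $\lambda=-\inr(K;E)$ follows by approximation.

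For item (iii) I would use the normal-cone identity $N(K+L,x+y)=N(K,x)\cap N(L,y)$ (where $N(\cdot,\cdot)$ denotes the cone of outer normal vectors at a boundary point, and $x,y$ are chosen in the support sets of $K$ and $L$ with outer normal $u$): if $u$ is $0$-extreme for $K$ at $x$, it cannot be a linear combination of two linearly independent vectors of $N(K,x)$, hence a fortiori not of the smaller cone $N(K+L,x+y)$, so $u\in\U(K+L)$; symmetrically $\U(L)\subseteq\U(K+L)$. Since dilating a summand leaves all these normal cones unchanged (up to the obvious identification of boundary points), $\U(K+L)=\U(K+\mu L)$ for $\mu>0$. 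Strictness of the inclusion will be exhibited by an explicit example; a convenient one lives in $\R^3$, taking for $K$ and $L$ lower-dimensional convex bodies each having an edge whose normal cone is a two-dimensional wedge, with the two wedges sitting in transverse planes, so that their intersection collapses to a single ray and thereby produces a $0$-extreme normal of $K+L$ that is $0$-extreme for neither summand (cf. \cite[Lemma 3.1]{HCS3}). Item (i) follows along similar lines: for $u\in\U(K_{\lambda})$ with $-\inr(K;E)<\lambda\le0$ the defining inequality in \eqref{e:K_lambda1} is tight, i.e. $h(K_{\lambda},u)=h(K,u)-\abs{\lambda}h(E,u)$; picking $x$ in the support set of $K_{\lambda}$ with outer normal $u$ and a point $e^{*}\in E$ with $\langle e^{*},u\rangle=h(E,u)$, one checks using item (iv) that $x':=x+\abs{\lambda}e^{*}\in\bd K$ and that $N(K,x')\subseteq N(K_{\lambda},x)$, so $0$-extremality of $u$ at $x$ passes to $x'$, giving $u\in\U(K)$; see \cite[Lemma 4.5]{SYphd}.

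The part I expect to require genuine care is item (ii), the identity $\cl\U(K)=\U(\form{K})$ under regularity of $E$. The inclusion $\U(\form{K})\subseteq\cl\U(K)$ is of the same type as (iii), since $\form{K}$ is assembled from half-spaces with outer normals in $\U(K)$ and its $0$-extreme normals must be limits of "active" such normals. The reverse inclusion $\cl\U(K)\subseteq\U(\form{K})$ is the delicate one and is where regularity of $E$ enters: $\form{K}$ is a tangential body of $E$, and for $u\in\cl\U(K)$ the supporting hyperplane of $E$ with outer normal $u$ meets $E$ in a single (regular) point, which keeps the support set of $\form{K}$ with outer normal $u$ small enough that $u$ cannot be split as a linear combination of two independent outer normals of $\form{K}$. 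This is precisely the argument of \cite[Lemma~2.6]{SYphd} (see also \cite[Lemma 2.1]{HCS}), which I would reproduce; the regularity hypothesis is essential, as dropping it may create non-extreme normal directions along flat faces of $\form{K}$.
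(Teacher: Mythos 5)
The paper itself offers no proof of this lemma: every item is quoted directly from the literature (Sangwine--Yager's thesis, \cite{HCS}, \cite{HCS3}), so there is no in-paper argument to compare against and your proposal must be judged on its own. Items (iv) and (v) are correct: (iv) is immediate from the definition of the Minkowski difference, and your support-function computation for (v), combining $h(\form{K},u)\leq h(E,u)$ for $u\in\U(K)$ with \eqref{e:K_lambda1} and \eqref{e:K_cap_U(K)}, is exactly the standard argument (no approximation is needed at $\lambda=-\inr(K;E)$, since both ingredients hold on the closed range). Item (iii) via the normal-cone identity $N(K+L,x+y)=N(K,x)\cap N(L,y)$ is also sound, and orthogonal segments in $\R^3$ (or the paper's own polytope $P$ with $P+\form{P}$) do witness strictness; for (ii) you defer to the cited proofs, which is what the paper does as well.

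The genuine gap is in item (i). Your argument opens with the assertion that for $u\in\U(K_{\lambda})$ the inequality in \eqref{e:K_lambda1} is tight, i.e.\ $h(K_{\lambda},u)=h(K,u)-\abs{\lambda}h(E,u)$. This is true, but it does not follow from the definition, which only yields ``$\leq$''; it is itself a nontrivial known lemma on inner parallel bodies, of essentially the same depth as item (i), and proving it requires real work (for instance, relating $N(K_{\lambda},x)$ to the normal cones of $K$ at the contact points of $x+\abs{\lambda}E$ with $\bd K$). As written, your proof of (i) therefore assumes a statement that would itself have to be proved or cited separately. A second, smaller issue: from $N(K,x')\subseteq N(K_{\lambda},x)$ you conclude $u\in\U(K)$, but $0$-extremality demands non-decomposability at \emph{every} boundary point of $K$ at which $u$ is an outer normal, i.e.\ at every point of the support set $F(K,u)=\{y\in K:\langle y,u\rangle=h(K,u)\}$, and the normal cones at other points of $F(K,u)$ may be strictly larger than $N(K,x')$. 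This part is repairable in one line: if $u=v_{1}+v_{2}$ with $v_{1},v_{2}\in N(K,y)$ linearly independent for some $y\in F(K,u)$, then for any $y'\in F(K,u)$ one has $\langle y',v_{1}\rangle+\langle y',v_{2}\rangle=\langle y',u\rangle=h(K,u)=\langle y,v_{1}\rangle+\langle y,v_{2}\rangle=h(K,v_{1})+h(K,v_{2})$, which forces $v_{1},v_{2}\in N(K,y')$; hence decomposability can be tested at the single point $x'$. With that observation your reduction works, but the tightness claim remains the missing (and essential) ingredient, which is precisely why the paper cites \cite[Lemma 4.5]{SYphd} rather than treating (i) as elementary.
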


\begin{remark}\label{r: equalities}
The equality cases in Lemma \ref{Collection of properties} (iv) and (v)
are well-known:
\begin{enumerate}
\item Equality holds in (iv) for all $-\inr(K;E)\leq\lambda\leq 0$ if and only
if $K=K_{-\inr(K;E)}+\inr(K;E)E$ (see \cite[p. 81]{SYphd}).

\item If $E$ is regular, equality holds in (v) for all
$-\inr(K;E)\leq\lambda\leq 0$ if and only if $K$ is a tangential body of
$K_{-\inr(K;E)}+\inr(K;E)E$ satisfying $\U(K)=\U(K_{\lambda}+K^*)$ for all
$-\inr(K;E)\leq\lambda\leq 0$ (see \cite[Theorem~2.2]{HCS3}).
\end{enumerate}
\end{remark}

Let $K,E\in\K^n_n$. From now on we will write
$K_{\lambda}^*=(K_{\lambda})^*$ to denote the form body of the inner
parallel body of $K$ at distance $\abs{\lambda}$, $-\inr(K;E)<\lambda\leq
0$. The following counterpart of the relations contained in Lemma
\ref{Collection of properties} (v), can be found in \cite[Corollary to
Lemma 4.8]{SYphd} (see also Lemma~\ref{Collection of properties} (ii)).

\begin{proposition}[{\cite[Corollary to
Lemma 4.8]{SYphd}}]\label{p: cond extremos SY} Let $K,E\in\K^n_n$, with
$E$ regular. Assume that, for some $-\inr(K;E)<\lambda<0$, the relation
\begin{equation}\label{e: cond extremos SY}
\U(K_\lambda^*)=\U(K_\lambda+K_\lambda^*)
\end{equation}
holds. Then,
\begin{equation}\label{e: eq contraej}
K\subseteq K_\lambda+\abs{\lambda}K_\lambda^*.
\end{equation}
For $n=2$ there is equality in \eqref{e: eq contraej} for all
$K\in\K^2_2$.
\end{proposition}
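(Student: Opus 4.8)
The plan is to deduce the inclusion \eqref{e: eq contraej} directly from the defining inequalities of the various bodies via their $0$-extreme normal vectors, using \eqref{e:K_cap_U(K)} as the organizing principle. Since $K = \{x : \langle x,u\rangle \le h(K,u),\ u\in\U(K)\}$, it suffices to show that for every $u\in\U(K)$ we have $h(K,u) \le h(K_\lambda + \abs{\lambda}K_\lambda^*, u) = h(K_\lambda,u) + \abs{\lambda}\,h(K_\lambda^*,u)$. First I would rewrite the right-hand side: by the definition of the inner parallel body at the level of support functions (this is where $E$ regular matters, guaranteeing the inner parallel bodies behave well and that $\cl\U(K_\lambda) = \U(K_\lambda^*)$ by Lemma~\ref{Collection of properties}(ii) applied to $K_\lambda$), one has $h(K_\lambda,u) = h(K,u) - \abs{\lambda}h(E,u)$ at least for $u\in\U(K_\lambda)$, and in general the inequality $h(K_\lambda,u) \ge h(K,u) - \abs{\lambda}h(E,u)$ fails, so the argument must be steered through the extreme vectors of $K_\lambda$ and $K_\lambda^*$ rather than those of $K$ directly.

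The key step is therefore to exploit the hypothesis \eqref{e: cond extremos SY}: $\U(K_\lambda^*) = \U(K_\lambda + K_\lambda^*)$. Combined with Lemma~\ref{Collection of properties}(iii), which gives $\U(K_\lambda)\cup\U(K_\lambda^*)\subseteq\U(K_\lambda+K_\lambda^*)$, the hypothesis forces $\U(K_\lambda)\subseteq\U(K_\lambda^*)$; more importantly, it says that to describe $K_\lambda + \abs{\lambda}K_\lambda^*$ by \eqref{e:K_cap_U(K)} it is enough to use the vectors in $\U(K_\lambda^*)$, which by Lemma~\ref{Collection of properties}(ii) equals $\cl\U(K_\lambda)$. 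On $\cl\U(K_\lambda)$ the form body $K_\lambda^*$ satisfies $h(K_\lambda^*,u) = h(E,u)$ by its very definition (and continuity/closure). Hence for $u\in\U(K_\lambda^*) = \cl\U(K_\lambda)$ we get
\[
h(K_\lambda + \abs{\lambda}K_\lambda^*, u) = h(K_\lambda,u) + \abs{\lambda}h(K_\lambda^*,u) = h(K_\lambda,u) + \abs{\lambda}h(E,u) = h(K_\lambda + \abs{\lambda}E, u),
\]
and since $K_\lambda + \abs{\lambda}E = (K\sim\abs{\lambda}E) + \abs{\lambda}E$, which has the same $0$-extreme directions contained in $\U(K)$, one wants to conclude $K \subseteq K_\lambda + \abs{\lambda}K_\lambda^*$. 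The cleanest route is: $K_\lambda + \abs{\lambda}E \subseteq K$ always (Lemma~\ref{Collection of properties}(iv)), but we need the reverse-flavored inclusion after replacing $E$ by $K_\lambda^*$; the point is that $K_\lambda^* \supseteq E$ (a form body always contains $E$, being a tangential body of $E$), so $K_\lambda + \abs{\lambda}K_\lambda^* \supseteq K_\lambda + \abs{\lambda}E$, and the hypothesis \eqref{e: cond extremos SY} is exactly what upgrades this to containing all of $K$: on the relevant extreme directions the support function of $K_\lambda + \abs{\lambda}K_\lambda^*$ already reaches $h(K,u)$ because $K = K_\lambda + \abs{\lambda}E$ holds in those directions under the regularity/extreme-vector bookkeeping. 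I would write this last comparison carefully using \eqref{e:K_cap_U(K)} for the body $K_\lambda + \abs{\lambda}K_\lambda^*$ with index set $\U(K_\lambda^*)$.

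For the planar addendum, when $n=2$ every $0$-extreme normal vector situation is rigid: a convex body in the plane that is a tangential body of $E$ satisfies the homothety relation of Remark~\ref{r:tangential}, and more to the point, in $\R^2$ the hypothesis \eqref{e: cond extremos SY} holds automatically for every $K\in\K^2_2$ and every admissible $\lambda$, because $\U(K_\lambda)$ is (the closure of) a subset of the circle and the form body construction in the plane does not create genuinely new extreme directions after the Minkowski sum — this is the $2$-dimensional phenomenon that $\U(K_\lambda^*) = \U(K_\lambda + K_\lambda^*)$ is forced. Granting that, the inclusion \eqref{e: eq contraej} follows from the general part; and equality follows because in the plane $K_\lambda + \abs{\lambda}K_\lambda^* \subseteq K$ must also hold, which one gets from Lemma~\ref{Collection of properties}(v) together with the fact that in $\R^2$ equality in (v) is likewise automatic, so the two inclusions combine to give $K = K_\lambda + \abs{\lambda}K_\lambda^*$.

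**Main obstacle.** I expect the delicate point to be the justification, at the level of support functions indexed by $\U(K_\lambda^*)$, that reaching $h(K,u)$ on those directions genuinely yields $K\subseteq K_\lambda+\abs{\lambda}K_\lambda^*$ rather than merely a containment of some smaller body; this requires knowing that $\U(K_\lambda^*)$ is a valid determining set (via Lemma~\ref{Collection of properties}(ii), hence the role of $E$ regular) and that on it the identity $h(K,u) = h(K_\lambda,u) + \abs{\lambda}h(E,u)$ actually holds — i.e. that the Minkowski difference loses nothing in the extreme directions of the inner parallel body — which is the technical heart of \cite[Lemma 4.8 and its Corollary]{SYphd}.
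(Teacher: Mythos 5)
Your argument for the inclusion \eqref{e: eq contraej} has the right skeleton (the paper itself offers no proof of this proposition, it only cites \cite{SYphd}, and your reduction is indeed the mechanism behind that proof): by the hypothesis \eqref{e: cond extremos SY} together with Lemma~\ref{Collection of properties}~(iii) (needed to pass from $K_\lambda+K_\lambda^*$ to $K_\lambda+\abs{\lambda}K_\lambda^*$) and (ii), the body $K_\lambda+\abs{\lambda}K_\lambda^*$ is described, via \eqref{e:K_cap_U(K)}, by halfspaces with normals in $\U(K_\lambda^*)=\cl\U(K_\lambda)$, on which $h(K_\lambda^*,\cdot)=h(E,\cdot)$. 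Two caveats, though. First, your opening reduction (``it suffices to check $h(K,u)\leq h(K_\lambda+\abs{\lambda}K_\lambda^*,u)$ for $u\in\U(K)$'') is not valid: verifying support inequalities on $\U(K)$ does not yield $K\subseteq L$ (a square and its inscribed ball already show this); you do switch later to the correct determining set $\U(K_\lambda^*)$, which is what saves the argument. Second, and more importantly, the identity $h(K_\lambda,u)=h(K,u)-\abs{\lambda}h(E,u)$ for $u\in\cl\U(K_\lambda)$, which you yourself isolate as the ``technical heart'', is exactly the nontrivial ingredient: in general only ``$\leq$'' holds, and equality at $0$-extreme normal vectors of the inner parallel body is a lemma of Sangwine-Yager that must be quoted or proved; as written, your proof assumes the step that carries the whole content. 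Granting that lemma, your argument does establish the first assertion.

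The planar addendum is where there is a genuine gap. You derive the reverse inclusion $K_\lambda+\abs{\lambda}K_\lambda^*\subseteq K$ from Lemma~\ref{Collection of properties}~(v) plus the claim that ``equality in (v) is automatic in $\R^2$''. That claim is false: Remark~\ref{r: equalities}~(ii) characterizes equality in (v) by a restrictive tangential-body condition which generic planar bodies do not satisfy. Moreover, (v) involves the form body $K^*$ of $K$, and since $\U(K_\lambda)\subseteq\U(K)$ gives $K^*\subseteq K_\lambda^*$, Lemma (v) only controls the smaller body $K_\lambda+\abs{\lambda}K^*$ and gives no upper bound for $K_\lambda+\abs{\lambda}K_\lambda^*$. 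The reverse inclusion is genuinely delicate: take in $\R^2$ a square with one vertex truncated by a short edge, $E=\B_2$, and $\abs{\lambda}$ large enough that the truncating direction disappears from $\U(K_\lambda)$; then $K_\lambda$ is a square, \eqref{e: cond extremos SY} holds, and $K_\lambda+\abs{\lambda}K_\lambda^*$ is the untruncated square, which strictly contains $K$. So the equality for $n=2$ cannot be obtained along your lines (your observation that $\U(A+B)=\U(A)\cup\U(B)$ in the plane does make the hypothesis automatic, but equality additionally requires the directions in $\cl\U(K_\lambda)$ to still determine $K$, which your argument does not address), and the example indicates that the planar sentence must in any case be read with the precise hypotheses of \cite{SYphd}.
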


Condition \eqref{e: cond extremos SY} deserves further observations. On
the one hand, it is similar to the identity $\U(K_\lambda+K^*)=\U(K^*)$,
which is a direct consequence of the relation $\U(K_\lambda+K^*)=\U(K)$
needed in \cite[Theorem~2.2]{HCS3} (see Remark~\ref{r: equalities}),
together with Lemma~\ref{Collection of properties} (ii) and (iii).
However, since examples of convex bodies for which
$\U(K_\lambda)\subset\U(K)$ (strictly) are easily constructed, both
conditions are different. On the other hand, Lemma~\ref{Collection of
properties} (iii) yields $\U(K_\lambda^*)\subseteq
\U(K_\lambda+K_\lambda^*)$; however, the inclusion may be strict (see
Section \ref{s: counterexamples}).

\section{Convex bodies not satisfying the inclusion $K\subseteq K_\lambda+\abs{\lambda}K_\lambda^*$}\label{s: counterexamples}

Let $K\in\K^n$ be a convex body. In \cite{Domokos} the authors study the
isoperimetric quotient
$I(K_{-\lambda}):=\vol(K_{-\lambda})/\S(K_{-\lambda})^{n/(n-1)}$ of the
family of inner parallel bodies $K_{-\lambda}$, $0\leq
\lambda<\inr(K;\B_n)$, when $E=\B_n$, and analyze the behavior of the
function $I(\lambda)=I(K_{-\lambda})$: in Theorem~1 they prove that the
isoperimetric quotient function $I(\lambda)$ is non-increasing in $0\leq
\lambda<\inr(K;\B_n)$ {\em for all convex bodies}. However, as we
mentioned in the introduction, the proof of this result is erroneous.

The main idea of the proof is to bound from below the quotient defining
$I(\lambda)$. To this end, the numerator $\vol(K_\lambda)$ is bounded
using Lemma \ref{Collection of properties} (iv) and the property
$(K_{-\lambda})_{-\mu}=K_{-\lambda-\mu}$ for
$0\leq\lambda,\mu\leq\lambda+\mu<\inr(K;\B_n)$ (see \cite[(3.17)]{Sch}).
More precisely, and following the notation in \cite{Domokos}, for $0\leq
t\leq t_0<\inr(K;\B_n)$,
\[
\vol(K_{-t})\geq\vol\bigl(K_{-t_0}+\abs{t-t_0}\B_n\bigr).
\]
In order to bound the denominator $\S(K_{-t})$, the authors make use of
the monotonicity of the surface area applied to the content \eqref{e: eq
contraej}, namely,
\[
K_{-t}\subseteq K_{-t_0}+\abs{t-t_0}\form{K_{-t_0}}
\]
for $0\leq t\leq t_0<\inr(K;\B_n)$. However, this inclusion is not true
without further conditions (as, for instance, the equality
$\U(K_{-t_0}^*)=\U(K_{-t_0}+K_{-t_0}^*)$, see Proposition~\ref{p: cond
extremos SY}). Indeed, in the next, we prove that the content $K\subseteq
K_\lambda+\abs{\lambda}K_\lambda^*$, $-\inr(K;E)<\lambda\leq 0$, is not
valid in its full generality.

For $K\in\K^n_n$ and $\mu\geq 0$, we consider the following convex body:
\begin{equation}\label{e: family counterexample}
K(\mu):=\bigl\{x\in\R^n:\langle x,u\rangle\leq h(K,u)+\mu h(E,u),
u\in\U(K)\bigr\} .
\end{equation}
This construction appeared already in \cite{Larson, Linke,SYphd}. Indeed,
in \cite{SYphd} the following result was proven.

\begin{proposition}\label{p: construction counterexample}\cite{SYphd}
Let $K,E\in\K^n_n$ and let $\mu\geq 0$. Then
\begin{enumerate}\itemsep1pt
\item $K+\mu E\subseteq K+ \mu K^*\subseteq K(\mu)$.
\item $\inr\bigl(K(\mu);E\bigr)=\mu+\inr(K;E)$.
\item For $-\inr(K;E)-\mu\leq \lambda\leq 0$ we have
\[
K(\mu)_{\lambda}=\left\{\begin{array}{ll}
K(\mu+\lambda) & \;\text{ for }-\mu\leq\lambda\leq 0,\\[1mm]
K_{\lambda+\mu} & \;\text{ for }-\inr(K;E)-\mu\leq\lambda\leq -\mu.
\end{array}\right.
\]
%\item $\mu\form{K}\subseteq K(\mu)$.
\end{enumerate}
\end{proposition}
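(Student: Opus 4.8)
The plan is to verify the three items in order, using the defining description \eqref{e: family counterexample} of $K(\mu)$ in terms of the support function restricted to $\U(K)$, together with the basic facts collected in Lemma~\ref{Collection of properties} and the identity \eqref{e:K_cap_U(K)}. For (i), the inclusion $K+\mu E\subseteq K+\mu K^*$ is immediate from $E\subseteq K^*$ (indeed $h(E,u)\le h(E,u)$ trivially gives $E=\{x:\langle x,u\rangle\le h(E,u),\,u\in\s^{n-1}\}\subseteq\{x:\langle x,u\rangle\le h(E,u),\,u\in\U(K)\}=K^*$). For the second inclusion, note that $h(K+\mu K^*,u)=h(K,u)+\mu\,h(K^*,u)$, and since $h(K^*,u)\le h(E,u)$ for every $u\in\U(K)$ (again by the definition of $K^*$), we get $h(K+\mu K^*,u)\le h(K,u)+\mu\,h(E,u)$ for all $u\in\U(K)$; invoking \eqref{e:K_cap_U(K)} for $K+\mu K^*$ and the fact that $\U(K+\mu K^*)\subseteq\U(K)$ need not hold, one instead argues directly that any $x$ in the right-hand set of \eqref{e: family counterexample} is automatically in $K+\mu K^*$... — more carefully, one shows $K(\mu)=\bigcap_{u\in\U(K)}\{\langle\cdot,u\rangle\le h(K,u)+\mu h(E,u)\}$ contains $K+\mu K^*$ by checking the half-space inequalities only at $u\in\U(K)$, which suffices since $K+\mu K^*$ is cut out by its own $0$-extreme vectors, all of which lie in $\U(K)$ by Lemma~\ref{Collection of properties}~(iii). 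This gives (i).

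For (ii), the inclusion $K+\mu E\subseteq K(\mu)$ from (i) gives $\inr(K(\mu);E)\ge\mu+\inr(K;E)$, since a translate of $\inr(K;E)E$ inside $K$ yields a translate of $(\mu+\inr(K;E))E$ inside $K+\mu E$. For the reverse inequality, suppose $x+rE\subseteq K(\mu)$; then for every $u\in\U(K)$ we have $\langle x,u\rangle+r\,h(E,u)\le h(K(\mu),u)\le h(K,u)+\mu\,h(E,u)$, hence $\langle x,u\rangle\le h(K,u)-(r-\mu)h(E,u)$ for all $u\in\U(K)$, which by \eqref{e:K_lambda1} (in the form using $\U(K)$) says $x\in K_{-(r-\mu)}$ provided $r-\mu\le\inr(K;E)$; this forces $r\le\mu+\inr(K;E)$. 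Thus (ii) holds.

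For (iii), I would compute $h\bigl(K(\mu)_{\lambda},u\bigr)$ via \eqref{e:K_lambda1}. Using $\U(K(\mu))\subseteq\U(K)$ together with the reverse containment coming from the construction (the extreme vectors of $K(\mu)$ are exactly those of $K$, since $K(\mu)$ has the same facial normal structure — this needs Lemma~\ref{Collection of properties}~(i),(iii) and the explicit form \eqref{e: family counterexample}), one gets $K(\mu)_{\lambda}=\{x:\langle x,u\rangle\le h(K,u)+\mu h(E,u)-|\lambda|h(E,u),\,u\in\U(K)\}$. For $-\mu\le\lambda\le 0$ the coefficient of $h(E,u)$ is $\mu-|\lambda|=\mu+\lambda\ge 0$, so this is precisely $K(\mu+\lambda)$ by \eqref{e: family counterexample}. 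For $-\inr(K;E)-\mu\le\lambda\le-\mu$ the coefficient is $\mu+\lambda=-|\lambda+\mu|\le 0$, so the same description reads $\{x:\langle x,u\rangle\le h(K,u)-|\lambda+\mu|h(E,u),\,u\in\U(K)\}=K_{\lambda+\mu}$, using the $\U(K)$-version of \eqref{e:K_lambda1}; the range of $\lambda$ is exactly what makes $|\lambda+\mu|\le\inr(K;E)$, so $K_{\lambda+\mu}$ is well defined. The main obstacle is the bookkeeping around which index set ($\U(K)$, $\s^{n-1}$, or $\U$ of the intermediate body) legitimately describes each body: one must be careful that $\U(K(\mu))=\U(K)$ (or at least that the two bodies are cut out by the same relevant inequalities) so that passing inner parallel bodies commutes with the $+\mu h(E,\cdot)$ shift; once this identification is secured, the rest is a direct matching of support-function inequalities.
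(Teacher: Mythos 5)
Your overall strategy (matching support-function inequalities over the index set $\U(K)$) is sound, and indeed the paper itself offers no proof of this proposition (it is quoted from \cite{SYphd}), but two of the auxiliary claims you lean on are false, and one of them is exactly the pitfall this paper is about. In (i), to justify $K+\mu K^*\subseteq K(\mu)$ you assert that $K+\mu K^*$ ``is cut out by its own $0$-extreme vectors, all of which lie in $\U(K)$ by Lemma~\ref{Collection of properties}~(iii)''. That lemma gives the \emph{opposite} inclusion $\U(K)\cup\U(K^*)\subseteq\U(K+\mu K^*)$, and the polytope $P$ of \eqref{e: p} shows the containment you claim can fail: $(0,0,1)^{\intercal}\in\U(P+P^*)\setminus\U(P)$. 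Fortunately the claim is superfluous: since $K(\mu)$ is by definition an intersection of half-spaces indexed by $u\in\U(K)$, the inequality $h(K+\mu K^*,u)\le h(K,u)+\mu h(E,u)$ for $u\in\U(K)$ (which you did establish, as $h(K^*,u)\le h(E,u)$ there) already yields the inclusion. Similarly, in (iii) you base the computation on ``$\U(K(\mu))\subseteq\U(K)$'' and then ``$\U(K(\mu))=\U(K)$''; neither holds in general. The paper's Lemma~\ref{l:U(K)<U(Kmu)=clU(K)} gives $\U(K(\mu))=\cl\U(K)$, which may strictly contain $\U(K)$, and even that requires $E$ regular, whereas the proposition is stated for arbitrary $E\in\K^n_n$. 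So as written, the key identification in (iii) rests on a false step.

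The repair is simpler than what you attempt: never look at the extreme normals of $K(\mu)$ at all. For any convex set $C=\bigcap_{u\in U}\{x:\langle x,u\rangle\le f(u)\}$ and $\rho\ge 0$ one has $x+\rho E\subseteq C$ if and only if $\langle x,u\rangle\le f(u)-\rho h(E,u)$ for all $u\in U$. Applying this to the defining representation \eqref{e: family counterexample} of $K(\mu)$ gives directly
\[
K(\mu)_{\lambda}=\bigl\{x\in\R^n:\langle x,u\rangle\le h(K,u)+(\mu-\abs{\lambda})h(E,u),\ u\in\U(K)\bigr\},
\]
which is $K(\mu+\lambda)$ for $-\mu\le\lambda\le 0$ and, via the $\U(K)$-version of \eqref{e:K_lambda1} (itself a special case of the same observation together with \eqref{e:K_cap_U(K)}), equals $K_{\lambda+\mu}$ for $-\inr(K;E)-\mu\le\lambda\le-\mu$; your case analysis from that formula is correct. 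The same observation also cleans up the slight circularity at the end of (ii): from $x+rE\subseteq K(\mu)$ and $r\ge\mu$ you get $\langle x,u\rangle+(r-\mu)h(E,u)\le h(K,u)$ for all $u\in\U(K)$, hence $x+(r-\mu)E\subseteq K$ by \eqref{e:K_cap_U(K)}, and therefore $r-\mu\le\inr(K;E)$ outright, rather than ``provided'' it.
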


We will also need the following additional result.
\begin{lemma}\label{l:U(K)<U(Kmu)=clU(K)}
Let $K\in\K^n$ and let $E\in\K^n_n$ be regular. Then, for any $\mu\geq 0$,
\begin{equation}
\U\bigl(K(\mu)\bigr)=\cl\U(K).
\end{equation}
\end{lemma}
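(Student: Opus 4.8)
The plan is to establish the two inclusions $\U(K(\mu))\subseteq\cl\U(K)$ and $\cl\U(K)\subseteq\U(K(\mu))$ separately, using the defining half-space representation of $K(\mu)$ in \eqref{e: family counterexample} together with the minimality property \eqref{e:K_cap_U(K)} of the extreme-vector representation and the known facts collected in Lemma~\ref{Collection of properties}.

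\textbf{Reducing to the form body.} First I would observe that $K(\mu)$ can be read off directly from $K^*$: comparing \eqref{e: family counterexample} with the definition of the form body $\form{K}$, and using that $K\in\K^n_n$ has the representation $K=\{x:\langle x,u\rangle\le h(K,u),\,u\in\U(K)\}$, one sees that $K(\mu)=K+\mu\form{K}$ is false in general but that $K(\mu)$ is exactly the Minkowski combination behaving like $K$ with $\form K$ ``inflated''; more to the point, since both $h(K,\cdot)$ and $h(E,\cdot)=h(\form K,\cdot)$ on $\U(K)$ are the data defining $K(\mu)$, the case $\mu>0$ can be compared to $\mu=0$, where $K(0)=K$ has extreme-vector set $\U(K)$ (not yet its closure). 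So a genuine inflation occurs and I expect $K(\mu)$ to pick up all of $\cl\U(K)$.

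\textbf{The inclusion $\U(K(\mu))\subseteq\cl\U(K)$.} Here I would use that $K(\mu)$ is defined by the (closed) family of inequalities indexed by $u\in\U(K)$, hence equivalently by $u\in\cl\U(K)$ since the functions $u\mapsto h(K,u)+\mu h(E,u)$ are continuous on $\s^{n-1}$ and taking the closure of the index set does not change the intersection of half-spaces. Thus $K(\mu)=\{x:\langle x,u\rangle\le h(K(\mu),u),\,u\in\cl\U(K)\}$, and by the minimality of the extreme-vector set in \eqref{e:K_cap_U(K)} we get $\U(K(\mu))\subseteq\cl\U(K)$.

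\textbf{The reverse inclusion $\cl\U(K)\subseteq\U(K(\mu))$.} This is the main obstacle. Since $\U(K(\mu))$ is automatically closed (the set of $0$-extreme normals need not be closed in general, but here I would instead argue that its closure contains $\cl\U(K)$ and combine with the first inclusion), it suffices to show $\U(K)\subseteq\U(K(\mu))$, i.e.\ that every $0$-extreme normal vector of $K$ remains $0$-extreme for $K(\mu)$. The idea is that the half-space $\{x:\langle x,u\rangle\le h(K,u)+\mu h(E,u)\}$ is \emph{active} for $K(\mu)$ at exactly the direction $u$, and that inflating by $\mu h(E,u)$ with $E$ regular (so that $E$ has a smooth boundary and $h(E,\cdot)$ is differentiable) cannot create a new normal cone of dimension $\ge2$ at the corresponding boundary point: regularity of $E$ forces the contribution of $E$ to the support structure to behave like that of a smooth body, so the ``corner structure'' of $K(\mu)$ at the face with outer normal $u$ is inherited from that of $K$ at $u$. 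Concretely, I would show that if $u\in\U(K)$ then the supporting hyperplane of $K(\mu)$ with normal $u$ touches $K(\mu)$ in a face whose normal cone is one-dimensional — equivalently, $u$ cannot be written as a nontrivial combination of two independent normals of $K(\mu)$ at a common point — by relating boundary points of $K(\mu)$ to sums $x_K+\mu x_E$ with $x_K\in\bd K$, $x_E\in\bd E$ and using the regularity of $E$ to pin down $x_E$ uniquely from $u$. Then passing to closures and combining with the first inclusion yields $\U(K(\mu))=\cl\U(K)$, invoking Lemma~\ref{Collection of properties}(ii) applied to $K(\mu)$ if needed to identify $\cl\U(K(\mu))$ with $\U(\form{K(\mu)})$. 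The delicate point throughout is handling the directions in $\cl\U(K)\setminus\U(K)$: these are limits of extreme normals of $K$, and I must ensure the inflation does not ``smooth them away'', which is precisely where the hypothesis that $E$ is regular (and hence $\cl\U(K)=\U(\form K)$ by Lemma~\ref{Collection of properties}(ii)) does the work.
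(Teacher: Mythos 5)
Your first inclusion is essentially sound: since $K+\mu E\subseteq K(\mu)$ forces $h\bigl(K(\mu),u\bigr)=h(K,u)+\mu h(E,u)$ for $u\in\U(K)$, the defining halfspaces in \eqref{e: family counterexample} are supporting halfspaces of $K(\mu)$, the index set may be closed without changing the intersection, and the minimality behind \eqref{e:K_cap_U(K)} gives $\U\bigl(K(\mu)\bigr)\subseteq\cl\U(K)$. The reverse inclusion, however, is where the lemma really lives, and your sketch of it has two genuine gaps. First, the mechanism you propose --- writing boundary points of $K(\mu)$ as $x_K+\mu x_E$ with $x_K\in\bd K$, $x_E\in\bd E$, and using the regularity of $E$ to pin down $x_E$ --- only makes sense for the Minkowski sum $K+\mu E$, whereas $K(\mu)$ is in general \emph{strictly} larger than $K+\mu E$ (even than $K+\mu\form{K}$); that strict inclusion is precisely what the construction is exploited for in Section~\ref{s: counterexamples}. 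A boundary point of $K(\mu)$ at which $u$ is attained as an outer normal need not admit any such decomposition, so the normal cones you must control (to show $u$ cannot be a combination of two independent normals at one point of $K(\mu)$) are exactly the ones your argument does not reach. Second, even granting $\U(K)\subseteq\U\bigl(K(\mu)\bigr)$, combining with the first inclusion only yields $\U(K)\subseteq\U\bigl(K(\mu)\bigr)\subseteq\cl\U(K)$, and your fallback of comparing closures gives $\cl\U\bigl(K(\mu)\bigr)=\cl\U(K)$, not the asserted identity; you would still need to prove that $\U\bigl(K(\mu)\bigr)$ is closed, which is not automatic for an arbitrary convex body and is not supplied by your appeal to Lemma~\ref{Collection of properties}~(ii) for $\form{K(\mu)}$.

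Both gaps vanish at once with the observation the paper uses, which your opening paragraph circles around without landing on: comparing \eqref{e: family counterexample} with the definition of the form body shows that $K(\mu)$ \emph{is} the form body of $K$ taken with respect to the auxiliary gauge body $E'=K+\mu E$, because $h(E',u)=h(K,u)+\mu h(E,u)$. Since $E$ is regular, so is $E'$ (for $\mu>0$), and Lemma~\ref{Collection of properties}~(ii) applied with $E'$ in place of $E$ gives $\U\bigl(K(\mu)\bigr)=\cl\U(K)$ in one line; in particular the closedness of $\U\bigl(K(\mu)\bigr)$, which your route leaves open, comes for free because $K(\mu)$ is a form body with respect to a regular body. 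I would encourage you to redo the proof via this identification rather than trying to repair the boundary-decomposition argument.
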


\begin{proof}
It is enough to observe that, from the definition of $K(\mu)$, it follows
that $K(\mu)$ is the form body of $K$ with respect to $E'=K+\mu E$. Since
$E$ is regular, so is $K+\mu E=E'$, and hence the identity follows from
Lemma~\ref{Collection of properties}~(ii).
\end{proof}

%\begin{lemma}\cite[Lemma 6.1]{Linke}\label{l: P(mu) extremos}
%Let $P\in\K^n_n$ be a polytope and let $\mu\geq 0$. Then
%$\U(P)=\U\bigl(P(\mu)\bigr)$.
%\end{lemma}

We are now in a position to prove the announced non-validity of the
inclusion \eqref{e: eq contraej} without further assumptions. Next result
will be proven when the body $E=\B_n$, although it holds true for any
regular $E\in\K^n_n$.

\begin{proposition}\label{error}
There exists $K\in\K^n_n$ such that $K\supset
K_\lambda+\abs{\lambda}K_\lambda^*$ strictly for some
$-\inr(K;\B_n)<\lambda<0$.
\end{proposition}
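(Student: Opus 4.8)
The idea is to produce an explicit counterexample using the family $K(\mu)$ from \eqref{e: family counterexample}. Start with a well-chosen body $K_0\in\K^n_n$ that is \emph{not} a tangential body of $\B_n$ and for which $\U(K_0)$ is not closed, and set $K=K_0(\mu)$ for a suitable $\mu>0$. By Lemma~\ref{l:U(K)<U(Kmu)=clU(K)} we then have $\U(K)=\cl\U(K_0)$, so $K$ is regular-behaving on its extreme vectors even though $K_0$ was not. Now pick $\lambda=-\mu$: by Proposition~\ref{p: construction counterexample}~(iii), $K_{-\mu}=K_0(\mu)_{-\mu}=K_0$, and its form body is $K_{-\mu}^*=(K_0)^*$. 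Then $K_{-\mu}+\abs{-\mu}K_{-\mu}^*=K_0+\mu K_0^*$, whereas $K=K_0(\mu)$. The inclusion \eqref{e: eq contraej} at this $\lambda$ therefore reads $K_0(\mu)\subseteq K_0+\mu K_0^*$, which is exactly the \emph{reverse} of Proposition~\ref{p: construction counterexample}~(i). So the task reduces to exhibiting $K_0$ for which $K_0+\mu K_0^*\subsetneq K_0(\mu)$ strictly.

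**Where the strict inclusion comes from.** Proposition~\ref{p: construction counterexample}~(i) gives $K_0+\mu K_0^*\subseteq K_0(\mu)$, and $K_0(\mu)$ is defined using only $\U(K_0)$, while $K_0+\mu K_0^*$ has normal cone structure $\U(K_0+\mu K_0^*)\supseteq \U(K_0)\cup\U(K_0^*)$ by Lemma~\ref{Collection of properties}~(iii); crucially, $K_0+\mu K_0^*$ may have additional extreme vectors (coming from "new" edges created by the Minkowski sum) at which its support function is strictly smaller than $h(K_0,u)+\mu h(\B_n,u)$. The cleanest concrete choice is a planar polygon: take $K_0$ to be a triangle or a suitably asymmetric polygon in $\R^2$ whose form body $K_0^*$ (a tangential polygon of $\B_2$, i.e.\ a circumscribed polygon with the same outer normals) is such that $K_0+\mu K_0^*$ has an edge cut off relative to $K_0(\mu)$. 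In the plane $\U(K_0)$ is a finite set so it is automatically closed, which forces us to go to $n\ge 3$ if we want to also illustrate the $\U(K_0)\subsetneq\cl\U(K_0)$ phenomenon — but for the bare statement of Proposition~\ref{error} a planar polygonal example suffices and is simplest, since the statement only asks for \emph{some} $K$ and $\lambda$, and we may first prove it in the plane and then lift to $\R^n$ by taking a cylinder $K_0\times[0,1]^{n-2}$ or a suitable product.

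**Carrying it out / the main obstacle.** Concretely I would: (1) fix $E=\B_2$ and choose $K_0$ a convex polygon with outer unit normals $u_1,\dots,u_k$ and support values $h(K_0,u_i)$; (2) compute $K_0^*$, the tangential polygon of $\B_2$ with the same normals, so $h(K_0^*,u_i)=1$; (3) compute $K_0+\mu K_0^*$, whose support function at $u_i$ is $h(K_0,u_i)+\mu$ — but observe that for a \emph{direction $v$ not among the $u_i$} that arises as an edge normal of the Minkowski sum, the support value $h(K_0+\mu K_0^*,v)$ can be strictly below $h(K_0,v)+\mu h(\B_2,v)=h(K_0(\mu),v)$; (4) exhibit a point lying in $K_0(\mu)$ but violating the supporting inequality of $K_0+\mu K_0^*$ at such a $v$. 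The main obstacle is choosing the polygon so that the Minkowski sum genuinely introduces a new active facet where the inequality is strict — for a generic polygon the normal fans of $K_0$ and $K_0^*$ coincide and no new facet appears, so one must deliberately engineer $K_0$ (e.g.\ with a reflex configuration of normals, or an obtuse triangle) so that $K_0^*$ has a vertex "sticking out" past the corresponding region of $K_0(\mu)$; verifying the strictness then amounts to a short explicit coordinate computation, which I would present for one carefully chosen small polygon (say a triangle with a very obtuse angle) rather than in full generality.
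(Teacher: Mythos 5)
Your reduction is the same as the paper's: pass to $K=K_0(\mu)$, use Proposition~\ref{p: construction counterexample}~(iii) to identify $K_{-\mu}=K_0$ and $K_{-\mu}^*=K_0^*$, and thereby reduce the statement to finding $K_0\in\K^n_n$ and $\mu>0$ with $K_0+\mu K_0^*$ strictly contained in $K_0(\mu)$. Up to that point the plan is sound, and you also correctly locate the mechanism that must produce strictness: the Minkowski sum $K_0+K_0^*$ has to acquire an extreme normal direction that is not in $\U(K_0)\cup\U(K_0^*)$.

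The gap is in the concrete construction, which is exactly the step you defer: a planar example cannot exist, so ``an obtuse triangle or a suitably asymmetric polygon in $\R^2$'' will never do the job, and there is then nothing to lift by a cylinder. The paper itself records this obstruction: by Proposition~\ref{p: cond extremos SY}, for $n=2$ there is \emph{equality} in \eqref{e: eq contraej} for every $K\in\K^2_2$. Concretely, if $K_0\subset\R^2$ is a polygon with edge normals $u_1,\dots,u_k$, then $h(K_0^*,u_i)=1$, and in the plane a Minkowski sum of polygons has no edge normals beyond the union of those of its summands, so $\U(K_0+\mu K_0^*)=\U(K_0)\cup\U(K_0^*)=\U(K_0)$ and hence $K_0+\mu K_0^*=\bigl\{x:\langle x,u_i\rangle\leq h(K_0,u_i)+\mu,\ i=1,\dots,k\bigr\}=K_0(\mu)$ exactly; the ``new edge normal $v$'' you invoke in step (3) is impossible in $\R^2$, no matter how the polygon is engineered. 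The phenomenon you need occurs only in dimension $n\geq 3$, where an edge of $K_0$ and a non-parallel edge of $K_0^*$ can add up to a two-dimensional face of the sum with a genuinely new normal. This is precisely how the paper finishes: for the polytope $P$ of \eqref{e: p} in $\R^3$, the top edge of $P$ (parallel to the $x_1$-axis) and the top edge of $P^*$ (parallel to the $x_2$-axis) produce a facet of $P+P^*$ with normal $(0,0,1)^{\intercal}\notin\U(P^*)$, so $\U(P^*)\subset\U(P+P^*)$ strictly, which contradicts the equality $\U(K^*)=\U(K+K^*)$ that would follow from $K+\mu K^*=K(\mu)$ holding in general. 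Without such a three-dimensional example your argument stops exactly at the point you yourself flag as ``the main obstacle,'' so as written the proof is incomplete.
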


\begin{proof}
For any $K\in\K^n_n$ and $\mu\geq 0$, since $K=K(\mu)_{-\mu}$ is an inner
parallel body of $K(\mu)$ (see Proposition~\ref{p: construction
counterexample} (iii)), item (i) of Proposition \ref{p: construction
counterexample} yields
\begin{equation}\label{e: contenidos contraej}
K(\mu)_{-\mu}+\mu\form{K(\mu)_{-\mu}}=K+\mu\form{K}\subseteq K(\mu).
\end{equation}
So, we have to find a convex body $K$ (and $\mu>0$) such that the above
inclusion is strict.

If we assume, to the contrary, that
$K(\mu)_{-\mu}+\mu\form{K(\mu)_{-\mu}}=K(\mu)$ for all $K\in\K^n_n$ and
$\mu\geq 0$, then we have, in particular, that $K(\mu)=K+\mu\form{K}$.
Then, since $\U(K^*)=\cl\U(K)\supseteq\U(K)$ (Lemma~\ref{Collection of
properties} (ii)), we can use Lemma~\ref{Collection of properties}~(iii)
and Lemma~\ref{l:U(K)<U(Kmu)=clU(K)} to get
\[
\U(K^*)=\U(K)\cup\U(K^*)\subseteq\U(K+K^*)=\U\bigl(K(\mu)\bigr)=\cl\U(K)=\U(K^*).
\]
Hence
\begin{equation}\label{e:U(K*)=U(K+K*)}
\U(\form{K})=\U(K+\form{K}).
\end{equation}

Now, it will be enough to find a convex body for which the latter equality
does not hold, and so we will get the desired contradiction. The following
polytope $P$ makes the job (see Figure~\ref{f:P(12)}; we notice that it
coincides with the polytope $P(12)$ used in
\cite[Proposition~5.1]{Linke}). Let
\begin{equation}\label{e: p}
P=\left\{\begin{pmatrix}x_1\\x_2\\x_3\end{pmatrix}\in\R^3:
\begin{array}[c]{rcl}
\pm 12x_1+35x_3&\leq&432,\\
\pm 12x_2+5x_3&\leq&60,\\
x_3&\geq&0\\
\end{array}\right\}.
\end{equation}

\begin{figure}[h]
\begin{center}
\includegraphics[width=6.5cm]{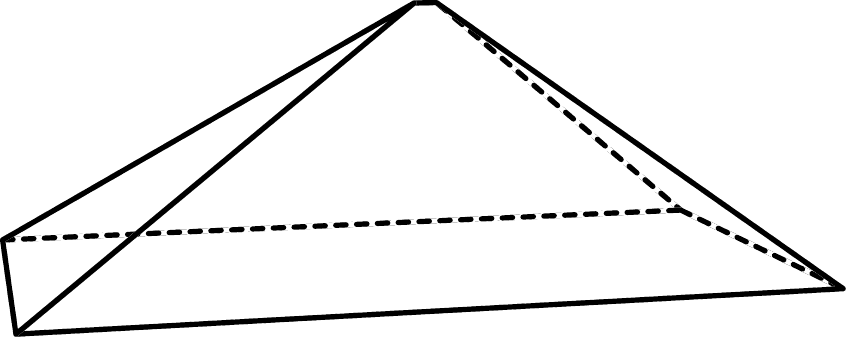}
\caption{A polytope such that $P(\mu)\supset P(\mu)_{-\mu}+\mu
P(\mu)_{-\mu}^*$ strictly, $\mu>0$.}\label{f:P(12)}
\end{center}
\end{figure}

On the one hand, since
\begin{equation*}
\form{P}=\left\{\begin{pmatrix}x_1\\x_2\\x_3\end{pmatrix}\in\R^3:
\begin{array}[c]{rcl}
\pm 12x_1+35x_3&\leq&37,\\
\pm 12x_2+5x_3&\leq&13,\\
x_3&\geq&-1\\
\end{array}\right\}
\end{equation*}
then
\[
\U(P^*)=\left\{\left(\pm
\frac{12}{37},0,\frac{35}{37}\right)^{\intercal},\left(0,\pm
\frac{12}{13},\frac{5}{13}\right)^{\intercal},(0,0,-1)^{\intercal}\right\}.
\]
On the other hand, the polytope $P+\form{P}$ has a facet with unit outer
normal vector $(0,0,1)^{\intercal}\not\in\U(\form{P})$, which arises from
the edge of $P$ determined by the straight line $\{x_2=0,x_3=12\}$, and
the edge of $\form{P}$ corresponding to the line $\{x_1=0,x_3=37/35\}$.
Therefore $\U(\form{P})\subset\U(P+\form{P})$ strictly, which contradicts
\eqref{e:U(K*)=U(K+K*)} and concludes the proof.
\end{proof}

\begin{remark}
Unfortunately, the previous polytope $P$ does not provide us with a
counterexample for the non-increasing behavior of the isoperimetric
quotient function $I(\lambda)$. Thus, except for particular families of
convex bodies (see Section~\ref{s: quotient and deficit}) it is not known
yet whether the isoperimetric quotient function is non-increasing for an
arbitrary convex body.
\end{remark}

\section{Isoperimetric quotients and deficits}\label{s: quotient and deficit}

We start pointing out that unlike the authors in \cite{Domokos}, we are
going to consider the family of inner parallel bodies defined in the range
$-\inr(K;E)\leq\lambda\leq 0$, which will reverse the behavior of the
isoperimetric quotient function (in the original paper \cite{Domokos} the
range is $0\leq\lambda\leq\inr(K;\B_n)$ what makes the behavior of
$I(\lambda)$ non-increasing). Furthermore, since we will also work with
the isoperimetric deficit, we will consider the isoperimetric quotient in
the usual way, namely, $\S(K_{\lambda})^n/\vol(K_{\lambda})^{n-1}$, in
order to compare the behavior in both cases.

In this section we are going to obtain new results concerning the behavior
of the isoperimetric quotient (and also of the isoperimetric deficit)
under assumptions on the boundary of the involved convex bodies. The first
condition we can impose is, actually, the boundary condition necessary to
validate the proof of Theorem 1 in \cite{Domokos}, namely:

\begin{theorem}[{\cite[Theorem 1]{Domokos}} revised]\label{t:Domokos_ok}
Let $K\in\K^n_n$. If
\[
\U(\form{K_\lambda})=\U(K_\lambda+\form{K_\lambda})\quad\text{ for }
-\inr(K;\B_n)\leq \lambda\leq 0,
\]
then the isoperimetric quotient
$\S(K_{\lambda})^n/\vol(K_{\lambda})^{n-1}$ is non-increasing.
\end{theorem}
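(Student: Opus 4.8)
The plan is to show that $q(\lambda):=\log\bigl(\S(K_\lambda)^n/\vol(K_\lambda)^{n-1}\bigr)=n\log\S(K_\lambda)-(n-1)\log\vol(K_\lambda)$ has upper right Dini derivative $\le 0$ on $(-\inr(K;\B_n),0)$; since $q$ is continuous, this forces the quotient to be non-increasing on the whole range (the endpoint $\lambda=-\inr(K;\B_n)$, where $K_\lambda$ may be lower dimensional, being handled by continuity). The crucial point, which dooms the naive finite-step estimate, is that one must compare the two bodies \emph{to first order}: the purely inclusion-based bounds $K_\lambda+\mu\B_n\subseteq K_{\lambda+\mu}\subseteq K_\lambda+\mu\form{K_\lambda}$ are individually too lossy (they already fail to reproduce the constant quotient of a cube), and it is only after differentiating that the mismatch between the ball and the form body disappears.

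First I would record the two one-sided derivative estimates. From Lemma~\ref{Collection of properties}~(iv) applied to $K_{\lambda+\mu}$ one has $K_\lambda+\mu\B_n\subseteq K_{\lambda+\mu}$, so the Steiner expansion $\vol(K_\lambda+\mu\B_n)=\vol(K_\lambda)+\mu\,\S(K_\lambda)+O(\mu^2)$ yields the lower bound $D_+\vol(K_\lambda)\ge\S(K_\lambda)$. For the surface area I would invoke the hypothesis: applying Proposition~\ref{p: cond extremos SY} to the body $K_{\lambda+\mu}$ at depth $-\mu$ (for which $(K_{\lambda+\mu})_{-\mu}=K_\lambda$, so its condition \eqref{e: cond extremos SY} is exactly the assumed $\U(\form{K_\lambda})=\U(K_\lambda+\form{K_\lambda})$) gives $K_{\lambda+\mu}\subseteq K_\lambda+\mu\form{K_\lambda}$; monotonicity of surface area together with $\S(K_\lambda+\mu\form{K_\lambda})=\S(K_\lambda)+\mu\,n(n-1)\V(K_\lambda[n-2],\form{K_\lambda},\B_n)+O(\mu^2)$ yields the upper bound $D^+\S(K_\lambda)\le n(n-1)\V(K_\lambda[n-2],\form{K_\lambda},\B_n)$. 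Feeding these into $D^+q\le n\,D^+\S(K_\lambda)/\S(K_\lambda)-(n-1)\,\S(K_\lambda)/\vol(K_\lambda)$ (the last term using $D_+\vol(K_\lambda)\ge\S(K_\lambda)>0$) reduces the whole statement to the single mixed-volume inequality
\begin{equation*}
n^2\,\V\bigl(K_\lambda[n-2],\form{K_\lambda},\B_n\bigr)\,\vol(K_\lambda)\le\S(K_\lambda)^2 .
\end{equation*}

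Using $\S(K_\lambda)=n\,\V(K_\lambda[n-1],\B_n)$, the displayed inequality is equivalent to $\V(K_\lambda[n-2],\form{K_\lambda},\B_n)\,\vol(K_\lambda)\le\V(K_\lambda[n-1],\B_n)^2$, and this is where I expect the real work to lie. I would prove it in three moves. The Aleksandrov--Fenchel inequality, applied to the mixed volume with the $n-2$ repeated entries $K_\lambda$, gives $\V(K_\lambda[n-1],\form{K_\lambda})^2\ge\vol(K_\lambda)\,\V(K_\lambda[n-2],\form{K_\lambda},\form{K_\lambda})$. Next, the defining property of the form body, namely $h(\form{K_\lambda},u)=h(\B_n,u)=1$ for $u\in\cl\U(K_\lambda)$ together with the fact that the surface area measure $\S_{n-1}(K_\lambda,\cdot)$ is carried by $\cl\U(K_\lambda)$, gives the identity $\V(K_\lambda[n-1],\form{K_\lambda})=\V(K_\lambda[n-1],\B_n)$. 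Finally, since $\form{K_\lambda}$ is a tangential body of $\B_n$ and hence $\B_n\subseteq\form{K_\lambda}$, monotonicity of mixed volumes in the last slot gives $\V(K_\lambda[n-2],\form{K_\lambda},\form{K_\lambda})\ge\V(K_\lambda[n-2],\form{K_\lambda},\B_n)$. Chaining the three yields $\V(K_\lambda[n-1],\B_n)^2\ge\vol(K_\lambda)\,\V(K_\lambda[n-2],\form{K_\lambda},\B_n)$, as required.

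The main obstacle is precisely this last inequality. A direct finite-difference comparison is insufficient, since it fails on tangential bodies such as the cube, where the quotient is constant and the estimate must hold with equality; one is therefore forced to differentiate and confront a mixed volume of $K_\lambda$, $\form{K_\lambda}$ and $\B_n$ that the elementary Minkowski inequality $\V(K_\lambda[n-1],\B_n)^2\ge\vol(K_\lambda)\,\W_2(K_\lambda)$ cannot control, because $\V(K_\lambda[n-2],\form{K_\lambda},\B_n)\ge\W_2(K_\lambda)$ points the wrong way. The resolution is the observation above that the form-body identity allows Aleksandrov--Fenchel to be applied with $\form{K_\lambda}$ in place of $\B_n$, after which the inequality closes. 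I would also verify the equality discussion, so that the argument is consistent with the constant quotient on cubes and, more generally, on bodies homothetic to a tangential body of $\B_n$.
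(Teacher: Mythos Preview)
Your argument is correct and proceeds along the same lines the paper indicates: the paper's ``proof'' consists simply in running the argument of \cite[Theorem~1]{Domokos}, where the two key one-sided bounds come from $K_{\lambda}+\mu\B_n\subseteq K_{\lambda+\mu}$ (Lemma~\ref{Collection of properties}~(iv)) and from the inclusion $K_{\lambda+\mu}\subseteq K_{\lambda}+\mu\form{K_\lambda}$, the latter now legitimately supplied by Proposition~\ref{p: cond extremos SY} under the assumed identity $\U(\form{K_\lambda})=\U(K_\lambda+\form{K_\lambda})$. Your reduction to the mixed-volume inequality $\V(K_\lambda[n-1],\B_n)^2\ge\vol(K_\lambda)\,\V(K_\lambda[n-2],\form{K_\lambda},\B_n)$, and its proof via Aleksandrov--Fenchel together with the form-body identity $\V(K_\lambda[n-1],\form{K_\lambda})=\V(K_\lambda[n-1],\B_n)$ (which holds because $S_{n-1}(K_\lambda,\cdot)$ is concentrated on $\cl\U(K_\lambda)$ and $h(\form{K_\lambda},\cdot)=1$ there) and the monotonicity step $\B_n\subseteq\form{K_\lambda}$, is exactly the computation underlying Domokos's differentiation argument; you have simply spelled it out in full.
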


The proof of this result is exactly the proof of
\cite[Theorem~1]{Domokos}, where the use of \eqref{e: eq contraej} is
justified by assuming \eqref{e: cond extremos SY}.

Next we prove that under different conditions to \eqref{e: cond extremos
SY}, the isoperimetric quotient function is also non-decreasing. In fact,
we will get a more general result for all the quermassintegrals of a
convex body $K$ (relative to an arbitrary $E\in\K^n_n$), which we define
next.

Given $K\in\K^n$ and $E\in\K^n_n$, the so-called {\em relative Steiner
formula} states that the volume of the Minkowski addition $K+\mu E$,
$\mu\geq 0$, is a polynomial of degree $n$ in $\mu$,
\begin{equation*}\label{e:steiner_rel}
\vol(K+\lambda E)=\sum_{i=0}^n\binom{n}{i}\W_i(K;E)\lambda^i.
\end{equation*}
The coefficients $\W_i(K;E)$ are called {\em (relative) quermassintegrals}
of $K$, and they are just a special case of the more general {\em mixed
volumes}, for which we refer to \cite[s.~5.1]{Sch}. In particular, we have
$\W_0(K;E)= \vol(K)$ and $\W_n(K;E)=\vol(E)$. Moreover, if $E=\B_n$, the
polynomial in the right hand side becomes the classical {\em Steiner
polynomial}, see \cite{Ste40}, and $n\W_1(K;\B_n)=\S(K)$ is the usual
surface area of $K$.

Let $\W_i(\lambda):=\W_i(K_{\lambda};E)$ for $-\inr(K;E)\leq\lambda\leq
0$. From the concavity of the family of inner parallel bodies (see
\cite[Lemma~3.1.13]{Sch}) and the general Brunn-Minkowski theorem for
relative quermassintegrals (see e.g. \cite[Theorem~7.4.5]{Sch}), it is
obtained that
\[
'\W_i(\lambda)\geq\W_i'(\lambda)\geq(n-i)\W_{i+1}(\lambda)
\]
for $i=0,\dots,n-1$ and for $-\inr(K;E)\leq\lambda\leq 0$. Here $'\W_i$
and $\W_i'$ denote, respectively, the left and right derivatives of the
function $\W_i(\lambda)$, and for $\lambda=-\inr(K;E)$ (respectively,
$\lambda=0$) only the right (left) derivative is considered ($\W_i'$ will
also denote the full derivative of $\W_i$ when the function is
differentiable). In \cite{HCS} the following definition was introduced.
\begin{definition}\label{d:r_p}
Let $E\in\K^n_n$ and let $0\leq p\leq n-1$. A convex body $K\in\K^n$ {\em
belongs to the class $\r_p$} if, for all $0\leq i\leq p$ and for
$-\inr(K;E)\leq\lambda\leq 0$,
\begin{equation}\label{e:def_R_p}
'\W_i(\lambda)=\W_i'(\lambda)=(n-i)\W_{i+1}(\lambda).
\end{equation}
\end{definition}
Notice that the class $\r_p$ depends on the fixed convex body $E$.
Nevertheless, and for the sake of simplicity, we will also omit $E$ in the
notation.

Since the volume is always differentiable with respect to $\lambda$ and
$\vol'(\lambda)=n\W_1(\lambda)$ (see e.g. \cite{Bol43,Mat}), the class
$\r_0$ consists of all convex bodies, i.e., $\r_0=\K^n$. From the
definition we get $\r_p\supset\r_{p+1}$, $p=0,\dots,n-2$, and all these
inclusions are strict (particular tangential bodies show it; see
\cite{HCS}). The problem of determining the convex bodies belonging to the
class $\r_p$ was studied by Bol \cite{Bol43} and Hadwiger \cite{Ha55} in
the 3-dimensional case when $E=\B_n$. In \cite{HCS} and \cite{HCS_rbeta}
the general classes $\r_{n-1}$ and $\r_{n-2}$, respectively, were
characterized. The cases $p=1,\dots,n-3$ remains open.

Finally, we recall the following inequalities for quermassintegrals, which
can be deduced from the well-known Aleksandrov-Fenchel inequalities for
mixed volumes (see e.g. \cite[Sections 7.3 and 7.4]{Sch}). They motivate
and are also needed to prove our results. Let $K\in\K^n$ and $E\in\K^n_n$.
Then
\begin{equation}\label{e:AF}
\W_i(K;E)\W_j(K;E)\geq\W_k(K;E)\W_l(K;E),\;\; 0\leq l< i\leq j<k\leq n,
\end{equation}
and
\begin{equation}\label{e:AF2}
\W_j(K;E)^{n-i}\geq\W_i(K;E)^{n-j}\vol(E)^{j-i},\quad 0\leq i\leq j\leq n.
\end{equation}
We notice that the last inequality, for $E=\B_n$ and $i=0$, $j=1$, yields
the well-known isoperimetric inequality $\S(K)^n\geq
n^n\vol(\B_n)\vol(K)^{n-1}$.

\subsection{Non-decreasing isoperimetric quotients}

Inspired on the families of inequalities \eqref{e:AF2}, we consider the
isoperimetric quotient (up to the constant $\vol(E)^{j-i}$)
$\W_j(K;E)^{n-i}/\W_i(K;E)^{n-j}$ and study its behavior for the family of
inner parallel bodies.

We start proving that for convex bodies lying in the suitable class
$\r_p$, the above isoperimetric quotients for inner parallel bodies are
non-increasing in the range $-\inr(K;E)<\lambda\leq 0$.

\begin{proposition}\label{p: quermass inner}
Let $0\leq i<j<n$, and let $K\in\r_j$ and $E\in\K^n_n$. Then the
isoperimetric quotient function $\W_j(\lambda)^{n-i}/\W_i(\lambda)^{n-j}$
is non-increasing for $-\inr(K;E)<\lambda\leq 0$. In particular,
\[
\frac{\W_j(K_\lambda;E)^{n-i}}{\W_i(K_\lambda;E)^{n-j}}\geq\frac{\W_j(K;E)^{n-i}}{\W_i(K;E)^{n-j}}.
\]
\end{proposition}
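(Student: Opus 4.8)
The plan is to show directly that the quotient function $f(\lambda):=\W_j(\lambda)^{n-i}/\W_i(\lambda)^{n-j}$ is differentiable on $(-\inr(K;E),0]$ with $f'(\lambda)\le 0$. First I would record the consequences of the hypothesis $K\in\r_j$: since $0\le i<j\le n-1$, condition \eqref{e:def_R_p} (with indices $i$ and $j$, both $\le j$) tells us that $\W_i(\lambda)$ and $\W_j(\lambda)$ are genuinely differentiable on $-\inr(K;E)\le\lambda\le 0$, with $\W_i'(\lambda)=(n-i)\W_{i+1}(\lambda)$ and $\W_j'(\lambda)=(n-j)\W_{j+1}(\lambda)$. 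I would also note that for $-\inr(K;E)<\lambda\le 0$ one has $K_\lambda\in\K^n_n$ (indeed $\inr(K_\lambda;E)=\inr(K;E)+\lambda>0$), so all quermassintegrals $\W_\ell(\lambda)$ are strictly positive there and $f$ is well defined and positive.

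Next I would compute the logarithmic derivative. Using the two derivative identities above,
\[
\frac{f'(\lambda)}{f(\lambda)}=(n-i)\frac{\W_j'(\lambda)}{\W_j(\lambda)}-(n-j)\frac{\W_i'(\lambda)}{\W_i(\lambda)}=(n-i)(n-j)\left[\frac{\W_{j+1}(\lambda)}{\W_j(\lambda)}-\frac{\W_{i+1}(\lambda)}{\W_i(\lambda)}\right].
\]
Since $f(\lambda)>0$ and $(n-i)(n-j)>0$, the sign of $f'(\lambda)$ coincides with the sign of $\W_i(\lambda)\W_{j+1}(\lambda)-\W_j(\lambda)\W_{i+1}(\lambda)$. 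Now I would invoke the Aleksandrov--Fenchel inequality \eqref{e:AF} for the body $K_\lambda$ with the index choice $(l,i,j,k)=(i,\,i+1,\,j,\,j+1)$, which is admissible because $0\le i<i+1\le j<j+1\le n$ (here $i+1\le j$ follows from $i<j$ and $j+1\le n$ from $j<n$). This gives $\W_{i+1}(\lambda)\W_j(\lambda)\ge\W_i(\lambda)\W_{j+1}(\lambda)$, hence $f'(\lambda)\le 0$ on $(-\inr(K;E),0]$. Therefore $f$ is non-increasing, and comparing the values at $\lambda$ and at $0$ yields the displayed inequality $\W_j(K_\lambda;E)^{n-i}/\W_i(K_\lambda;E)^{n-j}\ge\W_j(K;E)^{n-i}/\W_i(K;E)^{n-j}$.

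I do not expect a serious obstacle here; the argument is essentially a one-variable monotonicity computation. The two points that need care are, first, that one really must use membership in the class $\r_j$ rather than merely the general Brunn--Minkowski inequalities: without \eqref{e:def_R_p} one only has the one-sided estimates $'\W_\ell(\lambda)\ge\W_\ell'(\lambda)\ge(n-\ell)\W_{\ell+1}(\lambda)$, which do not suffice to pin down the sign of $f'$; and second, the bookkeeping of the indices so that \eqref{e:AF} genuinely applies (the constraint $i<j<n$ is exactly what is needed). The positivity of the quermassintegrals, required to form the quotient and the logarithmic derivative, is automatic from $K_\lambda\in\K^n_n$ for $\lambda>-\inr(K;E)$.
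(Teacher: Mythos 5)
Your proposal is correct and follows essentially the same route as the paper: use membership in $\r_j$ (hence also $\r_i$) to get $\W_i'(\lambda)=(n-i)\W_{i+1}(\lambda)$ and $\W_j'(\lambda)=(n-j)\W_{j+1}(\lambda)$, differentiate the quotient, and conclude $\phi'(\lambda)\le 0$ from the Aleksandrov--Fenchel inequalities \eqref{e:AF} applied with indices $i<i+1\le j<j+1\le n$. Passing through the logarithmic derivative instead of the quotient rule, and spelling out the positivity of the quermassintegrals, are only cosmetic differences from the paper's argument.
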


\begin{proof}
We consider the function
\[
\phi(\lambda):=\frac{\W_j(\lambda)^{n-i}}{\W_i(\lambda)^{n-j}}\quad\text{
for }\;-\inr(K;E)<\lambda\leq 0.
\]
Taking derivatives with respect to $\lambda$, and since
$K\in\r_j\subset\r_i$ because $i<j$, we can use the relations
$\W_i'(\lambda)=(n-i)\W_{i+1}(\lambda)$ and
$\W_j'(\lambda)=(n-j)\W_{j+1}(\lambda)$ to get
\[
\begin{split}
\phi'(\lambda) & =\frac{\W_j(\lambda)^{n-i-1}}{\W_i(\lambda)^{n-j+1}}
    \Bigl[(n-i)\W_i(\lambda)\W_j'(\lambda)-(n-j)\W_j(\lambda)\W_i'(\lambda)\Bigr]\\
 & =\frac{(n-i)(n-j)\W_j(\lambda)^{n-i-1}}{\W_i(\lambda)^{n-j+1}}
    \Bigl[\W_i(\lambda)\W_{j+1}(\lambda)-\W_j(\lambda)\W_{i+1}(\lambda)\Bigr].
\end{split}
\]
The Aleksandrov-Fenchel inequalities \eqref{e:AF} yield
$\phi'(\lambda)\leq 0$, i.e., $\phi(\lambda)$ is non-increasing when
$-\inr(K;E)<\lambda\leq 0$.
\end{proof}

We note that if $K$ is a tangential body of $E$ then, since
$K_{\lambda}=(1+\lambda)K$ (Remark~\ref{r:tangential}) and the $i$-th
quermassintegral is homogeneous of degree $n-i$ in its first argument (see
e.g. \cite[Theorem~6.13]{Grub}), the isoperimetric quotient function
\[
\phi(\lambda)=\frac{\W_j(\lambda)^{n-i}}{\W_i(\lambda)^{n-j}}
=\frac{\W_j\bigl((1+\lambda)K;E\bigr)^{n-i}}{\W_i\bigl((1+\lambda)K;E\bigr)^{n-j}}
=\frac{\W_j(K;E)^{n-i}}{\W_i(K;E)^{n-j}}
\]
is constant in $-1<\lambda\leq 0$ for all $0\leq i<j<n$ (without
additional assumptions on the classes $\r_p$).

\begin{remark}
For $\lambda\geq 0$, denoting by $\W_i(\lambda)=\W_i(K+\lambda E)$, $0\leq
i\leq n-1$, one has $\W_i'(\lambda)=(n-i)\W_{i+1}(\lambda)$ for all
$i=0,\dots,n-1$ directly from the Steiner formula for quermassintegrals
(see \cite[(5.29) and p.~225]{Sch}); here, for $\lambda=0$ only the right
derivative is considered. This yields that the isoperimetric quotient
function $\phi$ defined for $\lambda\geq 0$ satisfies $\phi'(\lambda)\leq
0$ too, and thus, $\phi$ is non-increasing in the full range
$\bigl(-\inr(K;E),\infty\bigr)$. We observe that, when $\lambda\geq 0$, no
examples of constant $\phi$, apart from $K=E$, are known to the authors.
\end{remark}

The case $i=0$, $j=1$ (and $E=\B_n$) in Proposition~\ref{p: quermass
inner} provides us with an alternative result on the monotonicity of the
classical isoperimetric quotient with respect to the family of inner
parallel bodies, now under a different assumption on $K$ (cf.
Theorem~\ref{t:Domokos_ok}):

\begin{corollary}
Let $K\in\K^n_n$. If $K\in\r_1$ then the isoperimetric quotient
$\S(K_{\lambda})^n/\vol(K_{\lambda})^{n-1}$ is a non-increasing function
for $-\inr(K;\B_n)<\lambda\leq 0$. In particular,
\[
\frac{\S(K_\lambda)^n}{\vol(K_\lambda)^{n-1}}\geq\frac{\S(K)^n}{\vol(K)^{n-1}}.
\]
\end{corollary}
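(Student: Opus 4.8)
The plan is to derive this corollary directly from Proposition~\ref{p: quermass inner} by specializing the parameters. First I would set $E=\B_n$, $i=0$ and $j=1$ in that proposition. Since $\r_0=\K^n$ and $\r_1\subset\r_0$, the hypothesis $K\in\r_1$ gives $K\in\r_j=\r_1$ as required. With these choices the general isoperimetric quotient function becomes
\[
\phi(\lambda)=\frac{\W_1(K_\lambda;\B_n)^{n-0}}{\W_0(K_\lambda;\B_n)^{n-1}}
=\frac{\W_1(K_\lambda;\B_n)^{n}}{\vol(K_\lambda)^{n-1}},
\]
using $\W_0(K_\lambda;\B_n)=\vol(K_\lambda)$. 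Then I would invoke the identity $n\W_1(K;\B_n)=\S(K)$ recorded just after the relative Steiner formula, so that $\W_1(K_\lambda;\B_n)^n=\S(K_\lambda)^n/n^n$, and the constant factor $1/n^n$ does not affect monotonicity. Hence the non-increasing behavior of $\phi$ on $-\inr(K;\B_n)<\lambda\leq 0$, which Proposition~\ref{p: quermass inner} provides, transfers verbatim to $\S(K_\lambda)^n/\vol(K_\lambda)^{n-1}$.

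For the displayed inequality I would simply evaluate the monotone function at the two endpoints of the relevant subinterval: since $\phi$ is non-increasing and $\lambda\leq 0$, we have $\phi(\lambda)\geq\phi(0)$, i.e. $\S(K_\lambda)^n/\vol(K_\lambda)^{n-1}\geq\S(K)^n/\vol(K)^{n-1}$, because $K_0=K$. This last fact, $K_0=K\sim 0\cdot\B_n=K$, follows immediately from the definition of the inner parallel body.

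There is essentially no obstacle here: the corollary is a pure specialization, and the only thing to be careful about is keeping track of the two normalizing constants ($1/n^n$ from the surface-area identity, and the absence of the $\vol(E)^{j-i}=\vol(\B_n)^{1}$ factor, which was already dropped in the statement of Proposition~\ref{p: quermass inner}). Neither constant depends on $\lambda$, so the monotonicity statement is unaffected; the proof is therefore just the sentence ``apply Proposition~\ref{p: quermass inner} with $E=\B_n$, $i=0$, $j=1$, and use $n\W_1(K;\B_n)=\S(K)$ together with $\W_0(K;\B_n)=\vol(K)$.''
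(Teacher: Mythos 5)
Your proposal is correct and is exactly the paper's route: the corollary is stated as the specialization $i=0$, $j=1$, $E=\B_n$ of Proposition~\ref{p: quermass inner}, using $n\W_1(K_\lambda;\B_n)=\S(K_\lambda)$ and $\W_0(K_\lambda;\B_n)=\vol(K_\lambda)$, with the $\lambda$-independent constant $n^n$ not affecting monotonicity. Your endpoint evaluation $\phi(\lambda)\geq\phi(0)$ with $K_0=K$ is also how the displayed inequality follows.
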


\subsection{Isoperimetric deficit}\label{ss: deficit}

Next we consider the isoperimetric deficit, instead of the quotient, of
the inequality \eqref{e:AF2}. As we will see, the behavior is the
opposite.

\begin{proposition}\label{quermass inner 2}
Let $0\leq i<j<n$, and let $K\in\r_i$ and $E\in\K^n_n$. Then the
isoperimetric deficit function
$\W_j(\lambda)^{n-i}-\W_i(\lambda)^{n-j}\vol(E)^{j-i}$ is non-decreasing
for $-\inr(K;E)<\lambda\leq 0$. In particular,
\[
\begin{split}
\W_j(K_\lambda;E)^{n-i} & -\W_i(K_\lambda;E)^{n-j}\vol(E)^{j-i}\\
 & \leq\W_j(K;E)^{n-i}-\W_i(K;E)^{n-j}\vol(E)^{j-i}.
\end{split}
\]

\end{proposition}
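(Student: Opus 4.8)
The plan is to mimic the proof of Proposition~\ref{p: quermass inner}, but now differentiating the deficit rather than the quotient. Define
\[
\psi(\lambda):=\W_j(\lambda)^{n-i}-\W_i(\lambda)^{n-j}\vol(E)^{j-i}\quad\text{for }-\inr(K;E)<\lambda\leq 0,
\]
and show $\psi'(\lambda)\geq 0$. Since $K\in\r_i$ we have $\W_i'(\lambda)=(n-i)\W_{i+1}(\lambda)$; however, because we do \emph{not} assume $K\in\r_j$ (only $K\in\r_i$), for the $j$-th quermassintegral we only have at our disposal the one-sided inequalities coming from the general Brunn--Minkowski theorem, namely ${}'\W_j(\lambda)\geq\W_j'(\lambda)\geq(n-j)\W_{j+1}(\lambda)$. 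So I would work with, say, the right derivative $\W_j'(\lambda)$ and write
\[
\psi'(\lambda)=(n-i)\W_j(\lambda)^{n-i-1}\W_j'(\lambda)-(n-j)\vol(E)^{j-i}\W_i(\lambda)^{n-j-1}\W_i'(\lambda).
\]

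Substituting $\W_i'(\lambda)=(n-i)\W_{i+1}(\lambda)$ and using $\W_j'(\lambda)\geq(n-j)\W_{j+1}(\lambda)$ (valid since all quermassintegrals are nonnegative, so multiplying by $(n-i)\W_j(\lambda)^{n-i-1}\geq0$ preserves the inequality), it suffices to show
\[
(n-i)(n-j)\W_j(\lambda)^{n-i-1}\W_{j+1}(\lambda)-(n-i)(n-j)\vol(E)^{j-i}\W_i(\lambda)^{n-j-1}\W_{i+1}(\lambda)\geq 0,
\]
i.e.
\[
\W_j(\lambda)^{n-i-1}\W_{j+1}(\lambda)\geq\vol(E)^{j-i}\W_i(\lambda)^{n-j-1}\W_{i+1}(\lambda).
\]
This is now an inequality among the quermassintegrals of the single fixed body $K_\lambda$, so I would derive it from the Aleksandrov--Fenchel inequalities \eqref{e:AF} and \eqref{e:AF2} applied to $K_\lambda$. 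The natural route: \eqref{e:AF} with the chain $i<i{+}1\leq j<j{+}1$ (or $i\le i+1\le j\le j+1$, splitting off the degenerate cases $j=i+1$ separately) gives $\W_{i+1}(\lambda)\W_j(\lambda)\leq\W_i(\lambda)\W_{j+1}(\lambda)$ and more generally the log-concavity of the sequence $k\mapsto\W_k(\lambda)$, from which $\W_j(\lambda)^{n-i-1}\W_{j+1}(\lambda)\big/\bigl(\W_i(\lambda)^{n-j-1}\W_{i+1}(\lambda)\bigr)$ can be compared to a ratio of the form $\bigl(\W_j(\lambda)/\W_i(\lambda)\bigr)^{\text{something}}$, and then \eqref{e:AF2} (which bounds such a ratio below by a power of $\vol(E)=\W_n$) supplies the factor $\vol(E)^{j-i}$. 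I expect this algebraic reduction — checking that the exponents match up so that repeated use of log-concavity of the quermassintegral sequence together with one application of \eqref{e:AF2} yields exactly the displayed inequality, and handling the boundary exponents $n-i-1$, $n-j-1$ (including the edge cases $j=n-1$ and $j=i+1$) — to be the main obstacle.

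Once $\psi'(\lambda)\geq 0$ is established for the right derivative at every $\lambda\in(-\inr(K;E),0]$, and since $\psi$ is continuous (the quermassintegrals depend continuously on $\lambda$), monotonicity of $\psi$ on the whole interval follows by a standard argument. The ``in particular'' statement is then immediate by evaluating $\psi$ at $\lambda$ and at $0$ and using $\psi(\lambda)\leq\psi(0)$, recalling that $K_0=K$. I would also remark, as the paper does after Proposition~\ref{p: quermass inner}, that if $K$ is a tangential body of $E$ then $K_\lambda=(1+\lambda)K$ and homogeneity of the quermassintegrals makes $\psi(\lambda)=(1+\lambda)^{\text{(degree)}}\psi(0)$ — wait, the exponents $n-i$ on $\W_j$ (degree $n-j$, so total $(n-i)(n-j)$) and $n-j$ on $\W_i$ (degree $n-i$, so total $(n-j)(n-i)$) agree, so $\psi(\lambda)=(1+\lambda)^{(n-i)(n-j)}\psi(0)$, which is indeed non-decreasing in $\lambda\in(-1,0]$ since $\psi(0)\geq0$ by \eqref{e:AF2}; this gives a clean consistency check and shows the bound is tight for tangential bodies.
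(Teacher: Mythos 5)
Your differentiation strategy coincides exactly with the paper's: define $\psi$, use $K\in\r_i$ to get $\W_i'(\lambda)=(n-i)\W_{i+1}(\lambda)$, use only the one-sided bound ${}'\W_j(\lambda)\geq\W_j'(\lambda)\geq(n-j)\W_{j+1}(\lambda)$ for the $j$-th quermassintegral, and reduce nonnegativity of the right derivative of $\psi$ to the single inequality
\[
\W_j(\lambda)^{n-i-1}\W_{j+1}(\lambda)\geq\W_i(\lambda)^{n-j-1}\W_{i+1}(\lambda)\vol(E)^{j-i}
\]
for the fixed body $K_\lambda$; the final passage from a nonnegative right derivative plus continuity to monotonicity is indeed standard (the paper cites \cite[Theorem~1]{MV}), and your tangential-body consistency check is correct. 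The one genuine gap is that the displayed inequality, to which you correctly reduce everything, is left unproved: you only indicate that it ``should'' follow from \eqref{e:AF} and \eqref{e:AF2} and explicitly call this the main obstacle. Since this is the only step of the proof requiring real work, as submitted the argument is incomplete.

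For the record, the paper closes that step in two moves. First, \eqref{e:AF2} applied to $K_\lambda$ with the pair $(i+1,j)$ gives $\W_j(\lambda)^{n-i-1}\geq\W_{i+1}(\lambda)^{n-j}\vol(E)^{j-i-1}$, so it suffices to prove $\W_{i+1}(\lambda)^{n-j-1}\W_{j+1}(\lambda)\geq\W_i(\lambda)^{n-j-1}\vol(E)$. This is trivial for $j=n-1$; for $j\leq n-2$ one uses the general log-concavity inequality $\W_s^{k-l}\geq\W_l^{k-s}\W_k^{s-l}$, $0\leq l\leq s\leq k\leq n$ (see \cite[(7.63)]{Sch}), with $(l,s,k)=(i,i+1,n-j+i-1)$ to get $\W_{i+1}(\lambda)^{n-j-1}\geq\W_i(\lambda)^{n-j-2}\W_{n-j+i-1}(\lambda)$, and then the Aleksandrov--Fenchel inequality \eqref{e:AF} in the form $\W_{n-j+i-1}(\lambda)\W_{j+1}(\lambda)\geq\W_i(\lambda)\W_n(\lambda)=\W_i(\lambda)\vol(E)$. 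Your heuristic is sound: on both sides of the key inequality the exponents sum to $n-i$ and the index-weighted sums equal $j(n-i)+1$, while the right-hand support $\{i,i+1,n\}$ straddles the left-hand support $\{j,j+1\}$, so the inequality is exactly a majorization consequence of concavity of $k\mapsto\log\W_k(\lambda)$ (all $\W_k(\lambda)>0$ because $\inr(K_\lambda;E)=\inr(K;E)+\lambda>0$). So the route you sketch does work, and it is essentially the paper's; it simply needs to be carried out with the explicit chain of exponents rather than asserted.
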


\begin{proof}
We consider the function
\[
\psi(\lambda)=\W_j(\lambda)^{n-i}-\W_i(\lambda)^{n-j}\vol(E)^{j-i}\quad\text{
for }-\inr(K;E)<\lambda\leq 0.
\]
Since $K\in\r_i$, we know that $\W_i(\lambda)$ is differentiable and
$\W_i'(\lambda)=(n-i)\W_{i+1}(\lambda)$; however, for $\W_j'(\lambda)$ we
can only take one-side derivatives, which satisfy
$'\W_j(\lambda)\geq\W_j'(\lambda)\geq(n-j)\W_{j+1}(\lambda)$. Thus, taking
the right derivative of $\psi(\lambda)$ with respect to $\lambda$ and
using the above relations, we obtain that
\[
\begin{split}
\psi'(\lambda) &
    =(n-i)\W_j(\lambda)^{n-i-1}\W_j'(\lambda)-(n-j)\W_i(\lambda)^{n-j-1}\W_i'(\lambda)\vol(E)^{j-i}\\
 & \geq (n-j)(n-i)\Bigl[
    \W_j(\lambda)^{n-i-1}\W_{j+1}(\lambda)-\W_i(\lambda)^{n-j-1}\W_{i+1}(\lambda)\vol(E)^{j-i}].
\end{split}
\]
Next we prove that
\begin{equation}\label{a}
\W_j(\lambda)^{n-i-1}\W_{j+1}(\lambda)\geq\W_i(\lambda)^{n-j-1}\W_{i+1}(\lambda)\vol(E)^{j-i}.
\end{equation}
Since $i<j$, we can use the relation
$\W_j(\lambda)^{n-i-1}\geq\W_{i+1}(\lambda)^{n-j}\vol(E)^{j-i-1}$
(cf.~\eqref{e:AF2}), and thus, in order to prove \eqref{a} it suffices to
show that
\begin{equation}\label{b}
\W_{i+1}(\lambda)^{n-j-1}\W_{j+1}(\lambda)\geq
\W_i(\lambda)^{n-j-1}\vol(E).
\end{equation}
If $j=n-1$, \eqref{b} holds trivially; so, we assume that $j\leq n-2$.

The family of inequalities given in \eqref{e:AF2} has a more general
version, namely, $\W_s^{k-l}(K;E)\geq\W_l^{k-s}(K;E)\W_k^{s-l}(K;E)$ for
$0\leq l\leq s\leq k\leq n$ (see e.g. \cite[(7.63)]{Sch}). Then, since
$i<i+1\leq n-j+i-1$, we can write
\[
\W_{i+1}(\lambda)^{n-j-1}\geq\W_i(\lambda)^{n-j-2}\W_{n-j+i-1}(\lambda),
\]
and hence, using also the Aleksandrov-Fenchel inequalities \eqref{e:AF} we
get
\[
\begin{split}
\W_{i+1}(\lambda)^{n-j-1}\W_{j+1}(\lambda) & \geq
    \W_i(\lambda)^{n-j-2}\W_{n-j+i-1}(\lambda)\W_{j+1}(\lambda)\\
 & \geq\W_i(\lambda)^{n-j-2}\W_i(\lambda)\vol(E)=\W_i(\lambda)^{n-j-1}\vol(E).
\end{split}
\]
It shows \eqref{b} and hence \eqref{a} holds.

So, we have that the right derivative $\psi'(\lambda)\geq 0$ for each
$\lambda\in\bigl(-\inr(K;E),0\bigr)$. Since $\psi$ is a continuous
function in the interval $\bigl[-\inr(K;E),0\bigr]$, \cite[Theorem~1]{MV}
yields that $\psi(\lambda)$ is non-decreasing when $-\inr(K;E)<\lambda\leq
0$, which conclude the proof.
\end{proof}

We point out that in Proposition \ref{p: quermass inner} we need that the
convex bodies lie in the class $\r_j$ whereas for Proposition
\ref{quermass inner 2} the assumption is weaker: the convex body has to
lie in $\r_i$, and $\r_j\subset \r_i$ because $i<j$. Therefore, in the
case of the classical isoperimetric deficit, i.e., $i=0$, $j=1$, since
$\r_0=\K^n$, no hypothesis is needed:

\begin{corollary}
For every $K\in\K^n$, the isoperimetric deficit
$\S(K_{\lambda})^n-n^n\vol(\B_n)\vol(K_{\lambda})^{n-1}$ is a
non-decreasing function for $-\inr(K;\B_n)<\lambda\leq 0$. In particular,
\[
\S(K_{\lambda})^n-n^n\vol(\B_n)\vol(K_{\lambda})^{n-1}\leq\S(K)^n-n^n\vol(\B_n)\vol(K)^{n-1}.
\]
\end{corollary}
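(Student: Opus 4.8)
The plan is to apply Proposition~\ref{quermass inner 2} with the specific choices $i=0$, $j=1$, and $E=\B_n$, and then translate the resulting statement into classical notation. First I would note that $i=0<1=j<n$ holds (here we are implicitly assuming $n\geq 2$, which is harmless since $n=1$ is degenerate), so the hypotheses of Proposition~\ref{quermass inner 2} require only $K\in\r_i=\r_0$; but as recalled just before Definition~\ref{d:r_p}, the volume is always differentiable in $\lambda$ with $\vol'(\lambda)=n\W_1(\lambda)$, so $\r_0=\K^n$ and \emph{every} $K\in\K^n$ qualifies with no further condition. This is exactly why the corollary is stated for all $K\in\K^n$.

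Next I would record the dictionary between relative quermassintegrals with $E=\B_n$ and the classical invariants: $\W_0(K;\B_n)=\vol(K)$, $n\W_1(K;\B_n)=\S(K)$, and $\W_n(K;\B_n)=\vol(\B_n)$. Substituting $i=0$, $j=1$ into the isoperimetric deficit function from Proposition~\ref{quermass inner 2} gives
\[
\psi(\lambda)=\W_1(\lambda)^{n}-\W_0(\lambda)^{n-1}\vol(\B_n)
=\Bigl(\tfrac{1}{n}\S(K_\lambda)\Bigr)^{n}-\vol(K_\lambda)^{n-1}\vol(\B_n)
=\tfrac{1}{n^n}\bigl(\S(K_\lambda)^n-n^n\vol(\B_n)\vol(K_\lambda)^{n-1}\bigr).
\]
Since $1/n^n$ is a positive constant, $\psi(\lambda)$ is non-decreasing on $-\inr(K;\B_n)<\lambda\leq 0$ if and only if $\S(K_\lambda)^n-n^n\vol(\B_n)\vol(K_\lambda)^{n-1}$ is, which is the asserted monotonicity; evaluating the inequality $\psi(\lambda)\le\psi(0)$ and clearing the constant yields the displayed particular case.

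There is essentially no obstacle here: the proof is a specialization plus a change of normalization, and both ingredients (the identity $\r_0=\K^n$ and the $E=\B_n$ dictionary) are already in the excerpt. The only point requiring a modicum of care is the bookkeeping on the exponents — confirming that with $i=0$, $j=1$ one has $\W_j^{n-i}=\W_1^n$, $\W_i^{n-j}=\W_0^{n-1}$, and $\vol(E)^{j-i}=\vol(\B_n)$ — and verifying that the positive scalar $n^{-n}$ does not affect monotonicity. Accordingly I would keep the proof to a sentence or two, citing Proposition~\ref{quermass inner 2} and the remark that $\r_0=\K^n$.

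\begin{proof}
Apply Proposition~\ref{quermass inner 2} with $i=0$, $j=1$ and $E=\B_n$. The only hypothesis needed is $K\in\r_0$, and $\r_0=\K^n$ since the volume is always differentiable with $\vol'(\lambda)=n\W_1(\lambda)$; hence the assumption holds for every $K\in\K^n$. Using $\W_0(\lambda)=\vol(K_\lambda)$, $n\W_1(\lambda)=\S(K_\lambda)$ and $\vol(\B_n)=\vol(\B_n)$, the isoperimetric deficit function of Proposition~\ref{quermass inner 2} equals
\[
\W_1(\lambda)^{n}-\W_0(\lambda)^{n-1}\vol(\B_n)
=\frac{1}{n^n}\Bigl(\S(K_\lambda)^n-n^n\vol(\B_n)\vol(K_\lambda)^{n-1}\Bigr),
\]
so, multiplying by the positive constant $n^n$, the function $\S(K_\lambda)^n-n^n\vol(\B_n)\vol(K_\lambda)^{n-1}$ is non-decreasing on $-\inr(K;\B_n)<\lambda\leq 0$, and evaluating the corresponding inequality at $\lambda$ and at $0$ gives the stated particular case.
\end{proof}
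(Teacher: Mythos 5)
Your proof is correct and follows exactly the route the paper intends: the corollary is the specialization $i=0$, $j=1$, $E=\B_n$ of Proposition~\ref{quermass inner 2}, using $\r_0=\K^n$ so that no hypothesis on $K$ is needed, together with the dictionary $\W_0(\lambda)=\vol(K_\lambda)$, $n\W_1(\lambda)=\S(K_\lambda)$ and the harmless positive factor $n^{-n}$. Nothing is missing; your bookkeeping of the exponents and the constant is accurate.
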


Again if we define the isoperimetric deficit function for positive values
of $\lambda$, namely, $\psi(\lambda)=\W_j(K+\lambda
E;E)^{n-i}-\W_i(K+\lambda E;E)^{n-j}\vol(E)^{j-i}$, $\lambda\geq 0$, we
also get the same monotonicity, and thus $\psi(\lambda)$ is non-decreasing
in the full range $\bigl(-\inr(K;E),\infty\bigr)$.

\section{On the perimeter of inner parallel bodies}\label{s: perimeter}

In \cite[Theorem 1.2]{Larson} the author provides a lower bound for the
surface area of the inner parallel bodies of a convex body $K$ (with
respect to $\B_n$); following our notation, it is shown that
\begin{equation}\label{e:result_Larson}
\S(K_\lambda)\geq\left(1+\frac{\lambda}{\inr(K;\B_n)}\right)^{n-1}\S(K).
\end{equation}
For the proof of the above inequality, the author uses an auxiliary result
(\cite[Lemma~2.1]{Larson}) which states that, for $K\in\K^n_n$ and
$-\inr(K;\B_n)\leq\lambda \leq 0$, among all convex bodies $L\in\K^n$
satisfying that $L_\lambda=K_\lambda$, the set
$L=K_\lambda+\abs{\lambda}\form{K}$ has maximal surface area.

The proof of this lemma runs with the implicit assumption of a condition
closely related to \eqref{e: cond extremos SY}, namely,
\begin{equation}\label{e:cond_Larson}
\U(K+\form{K})=\U(K),
\end{equation}
which is necessary to have the following equality, last step in the proof:
\[
\bigcap_{u\in\U(K)}\Bigl\{x\in\R^n:\langle x,u\rangle\leq
h\bigl(K+\abs{\lambda}\form{K},u\bigr)\Bigr\}=K+\abs{\lambda}\form{K}
\]
(cf. \eqref{e:K_cap_U(K)}). Unfortunately, condition \eqref{e:cond_Larson}
is not satisfied for all convex bodies $K\in\K^n_n$: indeed, although
$\U(K)\subseteq \U(K+\form{K})$ always holds (Lemma~\ref{Collection of
properties} (iii)), the reverse inclusion needs not be true in general, as
the polytope $P$ given in \eqref{e: p} shows; we notice that
$\U(P)=\U(\form{P})\subset\U(P+\form{P})$ strictly.

The proof of Lemma 2.1 in \cite{Larson} actually yields that for
$K\in\K^n_n$ and $-\inr(K;\B_n)\leq \lambda \leq 0$, among all convex
bodies $L\in\K^n$ satisfying that $L_\lambda=K_\lambda$, exactly the set
$L=K(\lambda)$ (cf.~\eqref{e: family counterexample}) has maximal surface
area. For the sake of completeness we state it as a result.
\begin{lemma}[{\cite[Lemma 2.1]{Larson}} revised]
Let $K\in\K^n_n$ and $-\inr(K;\B_n)\leq \lambda \leq 0$. Among all convex
bodies $L\in\K^n$ satisfying that $L_\lambda=K_\lambda$, exactly the set
\[
K(\lambda)=\bigl\{x\in\R^n: \langle x,u\rangle \leq h(K,u)+\abs{\lambda}
h(E,u), u\in\U(K)\bigr\}
\]
has maximal surface area.
\end{lemma}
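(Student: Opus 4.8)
The plan is to retrace the argument of \cite[Lemma~2.1]{Larson}, but to keep the extremal body written throughout as the construction \eqref{e: family counterexample} rather than as a Minkowski sum; this is exactly what avoids the unjustified identity \eqref{e:cond_Larson}. Put $M:=K_\lambda$ and assume first that $-\inr(K;E)<\lambda\le 0$, so that $M\in\K^n_n$ (the boundary value, where $M$ is lower dimensional and $\S(M)$ is controlled trivially, is disposed of separately or by continuity). The point to keep in mind is that $K(\lambda)$ is precisely the body produced by \eqref{e: family counterexample} from $M$ with parameter $\abs{\lambda}$. Call a body $L\in\K^n$ \emph{admissible} if $L_\lambda=M$. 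The proof then has three steps: (a)~$K(\lambda)$ is admissible, i.e.\ $K(\lambda)_\lambda=M$; (b)~$L\subseteq K(\lambda)$ for every admissible $L$; (c)~deducing the strict maximality of $\S\bigl(K(\lambda)\bigr)$.

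Step~(a) is immediate from Proposition~\ref{p: construction counterexample}(iii) applied to the body $M$ with parameter $\abs{\lambda}$: as $\lambda=-\abs{\lambda}$ falls in the first branch, $K(\lambda)_\lambda=M\bigl(\abs{\lambda}+\lambda\bigr)=M(0)=M$. Step~(b) is the core. Fix an admissible $L$. From the definition of the Minkowski difference and \eqref{e:K_cap_U(K)},
\[
M=L\sim\abs{\lambda} E=\bigl\{x\in\R^n:\langle x,u\rangle\le h(L,u)-\abs{\lambda} h(E,u),\ u\in\U(L)\bigr\},
\]
so that $h(M,u)\le h(L,u)-\abs{\lambda} h(E,u)$ for every $u\in\s^{n-1}$. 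Now $\U(M)=\U(L_\lambda)\subseteq\U(L)$ by Lemma~\ref{Collection of properties}(i), and, since the support function of a Minkowski difference is attained at each of its $0$-extreme normal vectors (which follows from the minimality property \eqref{e:K_cap_U(K)} together with the continuity of support functions), the last inequality is an equality for every $u\in\U(M)$. Hence, restricting the representation \eqref{e:K_cap_U(K)} of $L$ to the smaller index set $\U(M)\subseteq\U(L)$,
\[
L=\bigl\{x:\langle x,u\rangle\le h(L,u),\ u\in\U(L)\bigr\}\subseteq\bigl\{x:\langle x,u\rangle\le h(M,u)+\abs{\lambda} h(E,u),\ u\in\U(M)\bigr\}=K(\lambda).
\]

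For step~(c), monotonicity of the surface area under inclusion gives $\S(L)\le\S\bigl(K(\lambda)\bigr)$ for every admissible $L$, so that $K(\lambda)$ attains the maximal surface area in the family. To obtain the word ``exactly'', note that every admissible $L$ contains $M+\abs{\lambda} E=L_\lambda+\abs{\lambda} E$ by Lemma~\ref{Collection of properties}(iv), hence $L\in\K^n_n$, and likewise $K(\lambda)=M(\abs{\lambda})\supseteq M+\abs{\lambda} E\in\K^n_n$ by Proposition~\ref{p: construction counterexample}(i); since the surface area is strictly monotone on $\K^n_n$, the inclusion $L\subseteq K(\lambda)$ together with $\S(L)=\S\bigl(K(\lambda)\bigr)$ forces $L=K(\lambda)$. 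I expect the main obstacle to be step~(b), and more precisely the passage ``$h(M,u)=h(L,u)-\abs{\lambda} h(E,u)$ on $\U(M)$'': this is exactly where the proof in \cite{Larson} implicitly invoked \eqref{e:cond_Larson} in order to replace the admissible bodies by the single Minkowski sum $K_\lambda+\abs{\lambda}\form{K}$. Without that hypothesis one controls $h(L,\cdot)$ only on the extreme vectors of $K_\lambda$ and only before taking a Minkowski sum, so the genuine extremal body is the form-type body $K(\lambda)$ of \eqref{e: family counterexample}, which in general strictly contains $K_\lambda+\abs{\lambda}\form{K}$ and coincides with it only under a relation of the type \eqref{e: cond extremos SY}.
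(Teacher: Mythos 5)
Your steps (a) and (c) are essentially fine, but two points need attention, one of presentation and one of substance. The presentational one: your claim that the displayed body $K(\lambda)=\bigl\{x:\langle x,u\rangle\le h(K,u)+\abs{\lambda}h(E,u),\,u\in\U(K)\bigr\}$ ``is precisely the body produced by \eqref{e: family counterexample} from $M=K_\lambda$'' is false in general. The construction applied to $M$ uses the (possibly strictly smaller) set $\U(K_\lambda)$ and the (possibly strictly smaller) heights $h(K_\lambda,u)+\abs{\lambda}h(E,u)$; already for a square with one corner truncated, with $\abs{\lambda}$ large enough that the truncating edge disappears from $K_\lambda$, the two bodies differ. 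Indeed, by Proposition~\ref{p: construction counterexample}(iii) the displayed body satisfies $K(\lambda)_\lambda=K\neq K_\lambda$, so as literally written it does not even lie in the admissible family; the only consistent reading of the statement (and the one your argument actually addresses, and which the paper intends when it invokes Larson's proof) is the construction \eqref{e: family counterexample} applied to $K_\lambda$ itself. You should flag this discrepancy explicitly rather than assert an identity that does not hold.

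The substantive gap is the central equality of step (b): $h(L_\lambda,u)=h(L,u)-\abs{\lambda}h(E,u)$ for every $u\in\U(L_\lambda)$. This is not a routine consequence of ``the minimality property \eqref{e:K_cap_U(K)} together with continuity of support functions''. Minimality concerns representing $L_\lambda$ by its \emph{own} support values $h(L_\lambda,u)$; it says nothing about whether the defining function $h(L,\cdot)-\abs{\lambda}h(E,\cdot)$ is attained at $0$-extreme normals, and a priori the inequality $h(L_\lambda,u)\le h(L,u)-\abs{\lambda}h(E,u)$ could be strict there. To invoke minimality you would first have to know that the halfspaces corresponding to the ``active'' directions alone cut out $L_\lambda$, which is exactly the same difficulty. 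The equality is true, but it is precisely the nontrivial content of the lemma (it is equivalent to the inclusion $L\subseteq (L_\lambda)(\abs{\lambda})$ you are after), and it requires a genuine argument: for instance, Aleksandrov's lemma that the Aleksandrov/Wulff body of a continuous function has support function agreeing with that function $S_{n-1}$-almost everywhere, combined with the fact that $\U(L_\lambda)$ is contained in the support of the surface area measure $S_{n-1}(L_\lambda,\cdot)$ (the closure of the set of extreme normal vectors); alternatively, the face-inclusion arguments in Sangwine-Yager's thesis, which constitute the valid part of Larson's proof that the paper itself is citing in place of a proof. As it stands, your write-up asserts the heart of the matter rather than proving it, so the proof is incomplete at its key point. (Also, the endpoint $\lambda=-\inr(K;\B_n)$, where $K_\lambda$ is lower dimensional and the representation \eqref{e:K_cap_U(K)} is no longer available, deserves more than a one-line dismissal, since the lemma includes it.)
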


We conclude this note pointing out that, although the proof of Theorem~1.2
in \cite{Larson} is partially based on a not correct lemma, the result
itself is valid: the proof of \eqref{e:result_Larson} follows from
\cite[Lemma 2.9]{SYphd}, which states that
\[
K_\lambda\supseteq \left(1+\frac{\lambda}{\inr(K;\B_n)}\right)K,
\]
and the monotonicity and ($n-1$)-homogeneity of the surface area.

\medskip

\noindent {\bf Acknowledgements.} The authors would like to thank J. Yepes
Nicol\'as for pointing out to us the existence of results concerning the
behaviour of the isoperimetric quotient, in particular \cite{Domokos}.

\end{document}